\documentclass[a4paper, 12pt]{article}

\usepackage[english,francais]{babel}
\usepackage[T1]{fontenc}
\usepackage{lmodern}
\usepackage{array}
\usepackage[hmargin=2.5cm,vmargin=30mm]{geometry}
\usepackage{amsthm,amssymb,amsmath}
\usepackage{wrapfig,subfig}
\usepackage{hyperref}

\usepackage{graphics}
\def\picinput#1{\includegraphics{#1.pdf}}

\newtheorem{theorem}{Theorem}
\newtheorem{proposition}{Proposition}
\newtheorem{lemma}{Lemma}
\newtheorem{corollary}{Corollary}

\newtheorem*{lemma*}{Lemma}

\def\N{\mathbb{N}}
\def\Z{\mathbb{Z}}
\def\Q{\mathbb{Q}}
\def\R{\mathbb{R}}
\def\C{\mathbb{C}}

\def\Deck{\operatorname{Deck}}
\def\SL{\operatorname{SL}}

\def\PSL{\operatorname{PSL}}

\def\Id{\operatorname{Id}}
\def\Re{\operatorname{Re}}
\def\Im{\operatorname{Im}}

\def\SO{\operatorname{SO}}


\def\HHH{\mathcal{H}}
\def\QQQ{\mathcal{Q}}
\def\LLL{\mathcal{L}}
\def\MMM{\mathcal{M}}


\def\Tsurf{\mathrm{T}}
\def\Xsurf{\mathrm{X}}
\def\Lsurf{\mathrm{L}}

\def\Xablocus{\mathcal{G}}

\def\epsilon{\varepsilon}

\title{Diffusion for the periodic wind-tree model}
\author{Vincent Delecroix, Pascal Hubert, Samuel Leli\`evre}
\date{}





\setcounter{tocdepth}{4}

\begin{document}
\maketitle

\let\thefootnote\relax\footnotetext{\textit{2000 Mathematics Subject Classification class:} 30F30, 37E35, 37A40.}
\let\thefootnote\relax\footnotetext{\textit{Keywords:} Billiards, diffusion, translations surfaces, Lyapunov exponents, ergodic averages.}

\selectlanguage{english}

\begin{abstract}
The periodic wind-tree model is an infinite billiard in the plane with identical rectangular scatterers placed at each integer point. We prove that independtly of the size of scatters and generically with respect to the angle, the polynomial diffusion rate in this billiard is $2/3$.

\vskip 0.5\baselineskip

\selectlanguage{francais}
\begin{center}\textbf{R\'esum\'e}\end{center}
\vskip 0.5\baselineskip
{\bf Diffusion du vent dans les arbres}

\vspace{0.5\baselineskip}

Le vent dans les arbres p\'eriodique est un billard infini construit de la mani\`ere suivante. On consid\`ere le plan dans lequel sont plac\'es des obstacles rectangulaires identiques \`a chaque point entier. Une particule (identifi\'ee \`a un point) se d\'eplace en ligne droite (le vent) et rebondit de mani\`ere \'elastique sur les obstacles (les arbres). Nous prouvons qu'ind\'ependamment de la taille des obstacles et g\'en\'eriquement par rapport \`a l'angle initial de la particule le coefficient de diffusion polynomial des orbites de ce billard est $2/3$.
\end{abstract}

\selectlanguage{english}

\section{Introduction}

\label{section:introduction}
The wind-tree model is a billiard in the plane introduced by P.~Ehrenfest and T.~Ehrenfest in 1912 (\cite{EhEh}). We study the periodic version studied by J.~Hardy and J.~Weber~\cite{HaWe}. A point moves in the plane $\R^2$ and bounces elastically off rectangular scatterers following the usual law of reflection. The scatterers are translates of the rectangle $[0,a]\times[0,b]$ where $0<a<1$ and $0<b<1$, one centered at each point of $\Z^2$. We denote the complement of obstacles in the plane by $\Tsurf(a,b)$ and refer to it as the \emph{wind-tree model} or the \emph{infinite billiard table}. Our aim is to understand dynamical properties of the wind-tree model. We denote by $\phi^\theta_t: \Tsurf(a,b) \rightarrow \Tsurf(a,b)$ the billiard flow: for a point $p \in \Tsurf(a,b)$, the point $\phi^\theta_t(p)$ is the position of a particle after time $t$ starting from position $p$ in direction $\theta$.

It is proved in~\cite{HaWe} that the rate of diffusion in the periodic wind-tree model is $\log t \log \log t$ for very specific directions (generalized diagonals which corresponds to angles of the form $\arctan(p/q)$ with $p/q \in \Q$). Their result was recently completed by J.-P.~Conze and E.~Gutkin~\cite{CG} who explicit the ergodic decomposition of the billiard flow for those directions. K.~Fr\c{a}czek and C.~Ulcigrai recently proved that generically the billiard flow is non-ergodic. P.~Hubert, S.~Leli\`evre and S.~Troubetzkoy~\cite{HLT} proved that for a residual set of parameters $a$ and $b$, for almost every direction $\theta$, the flow in direction $\theta$ is recurrent. In this paper, we compute the polynomial rate of diffusion of the orbits which is valid for almost every direction $\theta$. We get the following. 
\begin{theorem} \label{thm:divergence_rate_billiard}
Let $d(.,.)$ be the Euclidean distance on $\R^2$. Then for all parameters $(a,b) \in (0,1)^2$, Lebesgue-almost all $\theta$ and every point $p$ in $\Tsurf(a,b)$ (with an infinite forward orbit)
\[
\limsup_{T \to +\infty} \frac{\log d(p,\phi^\theta_T(p))}{\log T} = \frac{2}{3}.
\]
\end{theorem}
By the $\Z^2$-periodicity of the billiard table $\Tsurf(a,b)$, our problem reduces to understand deviations of a-$\Z^2$ cocycle over the billiard in a fundamental domain. On the other hand, as the barriers are horizontals and verticals, an orbit in $\Tsurf(a,b)$ with initial angle $\theta$ from the horizontal takes at most four different directions $\{\theta,\pi-\theta,-\theta,\pi+\theta\}$ (the billiard is rational). By a standard construction consisting of unfolding the trajectories~\cite{Ta}, called the Katok-Zemliakov construction, the billiard flow can be replaced by a linear flow on a (non compact) translation surface which is made of four copies of $\Tsurf(a,b)$ that we denote $\Xsurf_\infty(a,b)$ (see section~\ref{section:Xinfty_as_a_Z2_cover} for the construction). The surface $\Xsurf_\infty(a,b)$ is $\Z^2$-periodic and we denote $\Xsurf(a,b)$ the quotient of $\Xsurf_\infty(a,b)$ under the $\Z^2$ action. As the unfolding procedure of the billiard flow is equivariant with respect to the $\Z^2$ action, $\Xsurf(a,b)$ can be also be seen as the unfolding of the billiard in a fundamental domain of the action of $\Z^2$ on the billiard table $\Tsurf(a,b)$.

The position of the particle in $\Xsurf_\infty(a,b)$ can be tracked from $\Xsurf(a,b)$. More precisely, the position of the particle starting from $p \in \Xsurf_\infty(a,b)$ in direction $\theta$ can be approximated by the pairing of a geodesic $\gamma_t(p)$ of $\Xsurf(a,b)$ seen as an element of the homology with a cocycle $f \in H^1(\Xsurf(a,b); \Z^2)$ describing the infinite cover $\Xsurf_\infty(a,b) \rightarrow \Xsurf(a,b)$. The growth of pairing of a fixed cocycle with geodesics in a translation surface is equivalent to the growth of certain Birkhoff sums over an interval exchange transformation. The estimation can be obtained from the action of $\SL(2,\R)$ on strata of translation surfaces $\HHH_g(\alpha)$ and more precisely of the Teichm\"uller flow which corresponds to the action of diagonal matrices $g_t = \left( \begin{smallmatrix}e^t&0\\0&e^{-t}\end{smallmatrix}\right)$ (see Section~\ref{section:background} for precise definitions). As proved by A.~Zorich~\cite{Zo1,Zo2} the Kontsevich-Zorich cocycle over the Teichm\"uller flow can be used to estimate the deviations of Birkhoff sums for generic interval exchange transformations with respect to the Lebesgue measure. More precisely, he proved that the Lyapunov exponents of the Kontsevich-Zorich cocycle is the polynomial rate of deviations. G.~Forni~\cite{Fo} relates this phenomenon to obstructions to solve cohomological equations and extends Zorich's proof to a more general context (see section~9 of~\cite{Fo}).

The surface $\Xsurf(a,b)$ is a covering of the genus $2$ surface $\Lsurf(a,b)$ which is a so called L-shaped surface that belongs to the stratum $\HHH(2)$. The orbit of $\Xsurf(a,b)$ for the Teichm\"uller flow belongs to a sub-locus of the moduli space $\HHH(2^4)$ that we call $\Xablocus$.

We now formulate a generalization of A.~Zorich's and G.~Forni's theorems about deviations of ergodic averages that is a central step in the proof of Theorem~\ref{thm:divergence_rate_billiard}. Let $\HHH(\alpha)$ be a stratum of Abelian differentials and $Y \in \HHH(\alpha)$ a translation surface. The Teichm\"uller flow $(g_t)$ can be used to renormalize the trajectories of the linear flow on $Y$. The Kontsevich-Zorich cocycle $B^{(t)}(Y):H^1(Y;\R) \rightarrow H^1(g_t \cdot Y;\R)$ (or KZ cocycle) measures the growth of cohomology vectors along the Teichm\"uller geodesic $\left(g_t \cdot Y \right)_t$. Let $\mu$ be a $g_t$-invariant ergodic probability measure on $\HHH(\alpha)$. It follows from \cite{Fo}, that the KZ cocycle is integrable for the measure $\mu$. From Oseledets multiplicative ergodic theorem, there exists real numbers $\nu_1(\mu) > \nu_2(\mu) > \ldots > \nu_k(\mu) > 0$, such that for $\mu$-almost every non zero Abelian differential $Y \in \HHH(\alpha)$ there exists a unique flag
\begin{align*}
H^1(Y;\R) = F^u_1 \supset F^u_2 \supset \ldots &\supset F^u_k \supset F^u_{k+1} = F^c \supset F^s_k \supset \ldots \supset F^s_1 \supset F^s_0 = \{0\}
\end{align*}
such that for any norm $\|.\|$ on $H^1(Y;\R)$, for all $1 \leq i \leq k$
\begin{enumerate}
\item if $f \in F^u_i \backslash F^u_{i+1}$, then
\[\lim_{t \to \infty} \frac{\log \|B^{(t)}(Y) \cdot f\|}{\log t} = \nu_i(\mu),\]
\item if $f \in F^s_i \backslash F^s_{i-1}$, then
\[\lim_{t \to \infty} \frac{\log \|B^{(t)}(Y) \cdot f\|}{\log t} = -\nu_i(\mu),\]
\item if $f \in F^c \backslash F^s_k$, then
\[\lim_{t \to \infty} \frac{\log \|B^{(t)}(Y) \cdot f\|}{\log t} = 0.\]
\end{enumerate}
There exists also positive integers $m_i$ for $i=1,\ldots,k$ and an integer $m$ such that for $\mu$ almost all translation surface $Y$ the filtration satisfies
\begin{itemize}
\item the dimension of $F^s_i$ is $m_1 + \ldots + m_i$,
\item the dimension of $F^c$ is $m_1 + \ldots + m_k + 2m$,
\item the dimension of $F^u_i$ is $m_1 + \ldots + m_{i-1} + 2m_i + \ldots + 2m_k + 2m$.
\end{itemize}
From the definition of the Teichm\"uller flow and the KZ cocycle, it follows that $\nu_1 = 1$. Forni proved that $m_1 = 1$ \cite{Fo}. The \emph{Lyapunov spectrum of the KZ} cocycle is the multiset of numbers
\[
\begin{array}{ccccccccc}
\nu_1 = 1 & \underbrace{\nu_2 \ldots \nu_2} & \ldots & \underbrace{\nu_k \ldots \nu_k} & \underbrace{0 \ldots 0} & \underbrace{-\nu_k \ldots -\nu_k} & \ldots & \underbrace{-\nu_2 \ldots -\nu_2} & -1 = -\nu_1 \\
 & \text{$m_2$ times} & \ldots & \text{$m_k$ times} & \text{$2m$ times} & \text{$m_k$ times}  & \ldots & \text{$m_2$ times}  \\
\end{array}
\]
The numbers $\nu_i(\mu)$ for $i=1,\ldots,k$ are called the \emph{positive Lyapunov exponents (with respect to $\mu$)}. The subspace $F^s = F^s_k$ is called the \emph{stable space} (at $Y$) of the KZ cocycle.

In order to state a precise statement for deviations, one needs genericity with respect to Lyapunov exponents but also an extra assumption on recurrence. Let $\mu$ be a $g_t$ ergodic measure on some stratum $\HHH(\alpha)$. We say that a surface $Y \in \HHH(\alpha)$ is \emph{generic recurrent for $\mu$} if there exist compact neighborhoods $U_i \subset \HHH(\alpha)$ of $Y$ such that $\bigcap_i U_i = \{Y\}$ and
\[
\lim_{t \to \infty} \frac{Leb(\{s; s \in [0,t] \quad \text{and} \quad g_s Y \in U\})}{t} = \mu(U).
\]
Birkhoff theorem ensures that this conditions is satisfied for almost every surface.
\begin{theorem} \label{thm:deviations}
Let $\mu$ be a $g_t$-ergodic measure on a stratum of Abelian differentials. Let $\nu_i$ for $i=1,\ldots,k$ denotes the positive Lyapunov exponents of the KZ cocycle for $\mu$ and denote, for an Oseledets generic surface $Y$, $F^u_i(Y)$, $F^c(Y)$ and $F^s_i(Y)$ the components of the flag of the Oseledets decomposition.

Then, for a surface $Y \in \HHH(\alpha)$ which is generically recurrent and Oseledets generic for $\mu$, for every point $p \in Y$ with an infinite forward orbit
\begin{enumerate}
\item along the unstable space the growth is polynomial: for all $1 \leq i \leq k$, for all $f \in F^u_i \backslash F^u_{i+1}$ \label{item:deviations_pos}
\[ \limsup_{T \to \infty} \frac{\log |\langle f, \gamma_T(p) \rangle |}{\log T} = \nu_i,\]
\item along the central space the growth is sub-polynomial: for all $f \in F^c \backslash F^s_k$
\[\limsup_{T \to \infty} \frac{\log |\langle f, \gamma_T(p) \rangle |}{\log T} = 0,\]
\item along the stable space the growth is bounded: there exists a constant $C$ such that for all $f \in F^s$
\[\forall T \geq 0,\ |\langle f, \gamma_T(p) \rangle | \leq C \|f\|.\]
\end{enumerate}
\end{theorem}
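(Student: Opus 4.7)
The plan is to follow the Teichmüller renormalization scheme of Zorich and Forni, which expresses $\langle f,\gamma_T(p)\rangle$ as a Birkhoff sum for an interval exchange and controls its growth through the Kontsevich--Zorich cocycle, while paying extra attention to the fact that the conclusion is required to hold for \emph{every} point $p$ with an infinite forward orbit, not merely for a generic $p$. First I fix a horizontal transversal $I \subset Y$ to the vertical flow whose first return map $T_I$ is an interval exchange transformation. A closed $1$-form representative of $f$ integrates to a locally constant function $\phi_f$ on $I$, so $\langle f,\gamma_T(p)\rangle$ equals, up to two boundary corrections coming from the initial and terminal pieces of the trajectory outside $I$, a Birkhoff sum $\sum_{k=0}^{N-1}\phi_f(T_I^k x)$ of length $N \asymp T$, where $x \in I$ is the first return of $p$ to $I$.

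Next I renormalize. By Poincaré recurrence for $\mu$, I choose times $t_n \to \infty$ at which $g_{t_n}\cdot Y$ returns to a fixed compact set of the stratum, and for a given $T$ pick $t=t_n$ with $e^{t_n}\asymp T$. Rauzy--Veech--Zorich induction up to time $t$ decomposes the length-$N$ Birkhoff sum into a bounded number of special Birkhoff sums for the induced IET on a segment of comparable normalized length, while acting on cohomology precisely by the KZ cocycle $B^{(t)}(Y)$. Oseledets's theorem applied to the KZ cocycle (integrable for $\mu$ by Forni's theorem recalled above) then yields $\|B^{(t_n)}(Y)\cdot f\| \asymp e^{\nu_i t_n}$ for $f\in F^u_i\setminus F^u_{i+1}$, so the Birkhoff sum is of order $T^{\nu_i}$, proving item (1); the central case (2) follows from the subexponential growth on $F^c\setminus F^s_k$. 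In the stable case (3), splitting the Birkhoff sum into blocks indexed by the renormalization times and bounding the contribution of the $n$-th block by $C\|f\|\,e^{-\nu_k t_n}$ yields a convergent geometric series, which is precisely what upgrades the sub-polynomial conclusion to the uniform bound $|\langle f,\gamma_T(p)\rangle|\le C\|f\|$.

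The main obstacle is the passage from almost every $x \in I$, which is all that the classical Zorich--Forni statement provides, to every $p \in Y$ with infinite forward orbit. For any such $p$, the arc from $p$ to its first hit $x\in I$ and the arc from the last hit of $I$ before time $T$ to the endpoint of the trajectory are linear-flow segments of finite length, bounded by a quantity depending on $p$ and $Y$ but not on $T$. The two boundary corrections in the pairing are therefore at most $C(p,Y)\|f\|$, which is negligible for the limsups in (1)--(2) and can be absorbed into the constant of (3). Since having an infinite forward orbit is exactly the condition for the trajectory and its intersections with $I$ to be defined for all $T\ge 0$, the three estimates transfer from $x$ to every admissible $p$, completing the argument.
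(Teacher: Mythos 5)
Your setup (transversal, Birkhoff sums, renormalization by return times to a compact set, KZ cocycle) is the same as the paper's, but there is a genuine gap at the heart of item (1): the lower bound. From Oseledets you get $\|B^{(t_n)}(Y)f\| \asymp e^{(\nu_i \pm \epsilon) t_n}$, but this controls the \emph{norm} of the transported cohomology class, not the actual pairing $\langle f, \gamma_T(p)\rangle$. The pairing is a signed sum of terms $\langle B^{(t_k)}f, \zeta_j\rangle$ over all renormalization scales $k \le n$, and nothing you have said rules out cancellation: the individual pairings of $B^{(t_n)}f$ with the specific curves $\zeta_{j}$ traversed by the orbit of $p$ could be small, or the large terms could cancel against the accumulated prefix. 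Your sentence ``so the Birkhoff sum is of order $T^{\nu_i}$'' is asserted, not proved, and as stated it is even stronger than the theorem (which only claims a $\limsup$; the growth is \emph{not} of order $T^{\nu_i}$ for all $T$). The paper spends an entire subsection on exactly this point: it sets up a dichotomy on the prefix (either the prefix already realizes the $\limsup$, or it is exponentially smaller than $\|B^{(t_n)}f\|$), and then proves a combinatorial lemma showing that among the first $K_1K_2$ return curves $\zeta_{j_1(n)},\ldots,\zeta_{j_{K_1K_2}(n)}$ visited after the prefix there is, along a subsequence of $n$, one position $\ell$ with $|\langle B^{(t_n)}f,\zeta_{j_\ell(n)}\rangle| \ge C\|B^{(t_n)}f\|$ while the earlier positions contribute at most $C/(K_1K_2)$ each; the triangle inequality then yields the lower bound at the times $T_n$. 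The input making this work for \emph{every} $p$ is minimality (every sufficiently long geodesic crosses all rectangles), not the boundedness of the initial/terminal arcs.

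Two further, more minor points. First, your claim that the induction decomposes the trajectory into ``a bounded number of special Birkhoff sums'' is not accurate: at scale $k<n$ the number of pieces is only bounded by (a constant times) $e^{t_{k+1}-t_k}$, so the upper bound requires summing over all scales and invoking Birkhoff genericity of $Y$ for the Teichm\"uller flow ($t_k/k \to M$) to control the total; this is where the paper's prefix--suffix decomposition lemma and the hypothesis that $Y$ is both Oseledets and Birkhoff generic enter. Second, you identify the passage from a.e.\ $x\in I$ to every $p\in Y$ as the main obstacle and resolve it via bounded boundary corrections; that part is fine but it is not where the real difficulty lies --- the genericity in the theorem sits entirely in the surface $Y$, and the uniformity in $p$ comes from the uniform estimates on the flow box (lengths and heights of rectangles, and the time $K_2$ after which all rectangles are visited), which you would need to state and use.
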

Theorem~\ref{thm:deviations} has first been proved by A.~Zorich~\cite{Zo0,Zo1,Zo2} for the Lebesgue measure on a connected component of a stratum or equivalently for a generic interval exchange transformation. G.~Forni~\cite{Fo} extended the theorem for a very large class of functions and for certain measures. More precisely, his proof of the lower bound relies on the existence of a particular translation surface in the support of the measure. A.~Bufetov~\cite{Bu} gave a proof of case~\ref{item:deviations_pos} of Theorem~\ref{thm:deviations} (when the cocycle $f$ is associated with a positive Lyapunov exponent) in the general context of symbolic dynamics which applies in particular to translation flows (Proposition 2. and 5. of \cite{Bu}). Our approach uses Veech's zippered rectangles~\cite{Ve1} and gives a concrete version of the renormalization process by the Teichm\"uller flow and the Kontsevich-Zorich cocycle in the flavor of~\cite{Zo1,Zo2} and~\cite{Fo}.

On the other hand, from results of A.~Eskin, M.~Kontsevich and A.~Zorich~\cite{EKZ2} about sum of Lyapunov exponents in hyperelliptic loci, we deduce that the Lyapunov exponent for $\Xsurf(a,b)$ which controls the deviation in the wind-tree model equals $2/3$. The value $2/3$ comes from algebraic geometry. More precisely, it corresponds to the degree of a subbundle of the Hodge bundle over the moduli space of complex curves (or Riemann surfaces) in which belongs the wind-tree cocycle.

Using only Birkhoff and Oseledets theorem, one can prove that the conclusion of Theorem~\ref{thm:divergence_rate_billiard} holds for almost every parameters $a,b$. In order to obtain all parameters we use a recent result of J.~Chaika and A.~Eskin~\cite{CE} which asserts that Birkhoff theorem for regular functions and Oseledets theorem for the Kontsevich-Zorich cocycle are more regular for $\SL(2,\R)$-invariant measures: they hold for all surfaces in almost every directions. The work of Chaika and Eskin strongly relies on previous work of A. Eskin and M. Mirzhakani~\cite{EM} and A. Eskin, M. Mirzakhani and M. Mohamadi~\cite{EMM} on $\SL(2,\R)$-invariant measures on strata of Abelian differentials.

The paper is organised as follows. In Section~\ref{section:background} we introduce the tools from Teichm\"uller theory which are involved in our proof of Theorem~\ref{thm:divergence_rate_billiard}. In Section~\ref{section:from_infinite_billiard_to_finite_surface}, we detail the unfolding procedure and prove that the distance in Theorem~\ref{thm:divergence_rate_billiard} corresponds to a pairing between a geodesic in $\Xsurf(a,b)$ with an integer cocycle. Then we reformulate Theorem~\ref{thm:divergence_rate_billiard} in the language of translation surfaces (see Theorem~\ref{thm:divergence_rate_surface}). In Section~\ref{sec:computation_of_Lyapunov_exponents} we compute the Lyapunov exponents relative to every measure on $\HHH(2^4)$ which is supported on the closure of the $\SL(2,\R)$-orbit of a surfaces $\Xsurf(a,b)$. Section~\ref{section:deviations} is devoted to the proof of Theorem~\ref{thm:deviations}.

\bigskip

The preprint~\cite{CE} appeared after preliminary versions of this paper. In earlier versions, conclusion of Theorem~\ref{thm:divergence_rate_billiard} was weaker and we rely heavily on classification of $\SL(2,\R)$-invariant measures in genus $2$ by K.~Calta~\cite{Ca} and McMullen~\cite{Mc1,Mc2,Mc3}.

\bigskip

\textbf{Acknowledgments:} The authors heartily thank A.~Avila, A.~Bufetov, G.~Forni and A.~Zorich for very fruitful discussions.

\tableofcontents

\section{Background} \label{section:background}
The main objects in this paper are:
\begin{itemize}
\item
\label{compact-transl-surf}
closed compact translation surfaces -- equivalently, closed compact Riemann surfaces endowed with a holomorphic 1-form;
\item
\label{infinite-area-transl-surf}
infinite-area periodic translation surfaces.
\end{itemize}
For general references on translation surfaces and interval exchange transformations we refer the reader to the survey of A.~Zorich~\cite{Zo3}, J.-C.~Yoccoz~\cite{Yo} or the notes of M.~Viana~\cite{Vi}.

A \emph{translation surface} is a surface which can be obtained by edge-to-edge gluing of polygons in the plane using translations only. Such a surface is endowed with a flat metric (the one from $\R^2$) and a canonical directions. There is a one to one correspondence between compact translation surfaces and compact Riemann surfaces equipped with a non-zero holomorphic 1--form. If $(Y,\omega)$ is a Riemann surface together with a holomorphic one-form, the flat metric corresponds to $|\omega|^2$. In particular, the area of $(Y,\omega)$ is $i/2 \int \omega \wedge \overline{\omega}$.

In a translation surface, directions are globally defined. Hence the geodesic flow in a direction can be defined on the surface. There is a canonical vertical direction in each translation surface and we refer to the flow in this direction as the \emph{linear flow}. The flow in the direction $\theta \in [0,2\pi)$ for the differential $\omega$ on $Y$ is the linear flow of $e^{-i\theta} \omega$ on $Y$. Note that the flow is not defined at the zeros of $\omega$.

Now we define the moduli space of translation surfaces. We use a marking in order to avoid symmetries which create singularities in the moduli space. Let $\alpha = (\alpha_1,\ldots,\alpha_s)$ and $g$ be integers such that $\alpha_1 + \ldots + \alpha_s = 2g-2$. Let $S$ be a compact (toplogical surface) of genus $g$ and let $\Sigma = \{x_1, x_2, \ldots, x_s\}$ be a set of $s$ points in $S$.
The stratum $\HHH_g(\alpha)$ is the set of (equivalence classes) translation structure on $S$ with
\begin{enumerate}
  \item zeros of degree $\alpha_i$ at $x_i$, and regular out of $\Sigma$,
  \item an horizontal separatrix is fixed at each $x_i$.
\end{enumerate}
Two translation structure $\omega$ and $\eta$ on $S$ are identified if there exists a diffeomorphism $\phi: S \rightarrow S$ that fix pointwise the set $\Sigma$ maps $\omega$ to $\eta$ and maps the marked horizontal separatrix of $\omega$ at $x_i$ to the marked horizontal separatrix of $\eta$ at $x_i$. We often use exponential notation for $\alpha$, for example $\HHH(2^4)$ means $\HHH(2,2,2,2)$ in our context. These strata can have up to three connected components, which were classified by M.~Kontsevich and A.~Zorich~\cite{KZ}, and distinguished by two invariants: hyperellipticity and parity of spin structure. We denote by $\HHH^{(1)}(\alpha) \subset \HHH(\alpha)$ the codimension $1$ subspace which consists of area $1$ translation surfaces.

Each stratum $\HHH_g(\alpha)$ carries a natural affine structure which makes it a manifold (if we forget markings, we obtain an orbifold). The affine structure is obtained from the association of $(S,\omega) \in \HHH_g(\alpha) \mapsto [\omega] \in H^1(S, \Sigma; \C)$ (the period map). On translation surfaces obtained by polgon gluings, this map may be seen as the edges (as element of $\C$). In that model, two surfaces are nearby if they are obtained from the same gluings and the edges are nearby for the natural topology on finite dimensional vector spaces.

There is a natural action of $\SL_2(\R)$ on components of strata $\HHH(\alpha)$ coming from the linear action of $\SL(2,\R)$ on $\R^2$. More precisely, let $(Y,\omega)$ be a translation surface obtained by gluing a finite family of polygons $(P_i)$ and $g \in \SL_2(\R)$. Then the surface $g \cdot (Y,\omega)$ is the surface obtained by gluing the polygons $(g \cdot P_i)$. The \emph{Teichm\"uller geodesic flow} on $\HHH_g$ is the action of the diagonal matrices $\displaystyle g_t = \begin{pmatrix} e^t&0\\0&e^{-t}\end{pmatrix}$. The image of the orbits $(g_t \cdot (X,\omega))_t$ in $\MMM_g$ are geodesic with respect to the Teichm\"uller metric. Each stratum $\HHH_g(\alpha)$ carries a natural \emph{Lebesgue measure}, invariant under the action of $\SL(2,\R)$. Moreover, this action preserves the area and hence $\HHH^{(1)}(\alpha)$. H.~Masur~\cite{Ma} and independently W.~Veech~\cite{Ve1} proved that on each component of a normalised stratum $\HHH^{(1)}(\alpha)$ the total mass of the Lebesgue measure is finite and the geodesic flow acts ergodically with respect to this measure. Another important one parameter flow on $\HHH(\alpha)$ is the \emph{horocycle flow} given by the action of $\displaystyle h_s = \begin{pmatrix}1&s\\0&1\end{pmatrix}$.

More generally, one can consider the strata of quadratic differentials with at most simple poles $\QQQ_g(\alpha)$ where $\alpha$ is an integer partition of $4g-4$. The degree $\alpha_i$ corresponds to a conic point of angle $(2+\alpha_i)\pi$. A translation surface associated to a quadratic differential may has non trivial holonomy with value in $\{1,-1\}$. The action of $\SL(2,\R)$ on Abelian differentials extends to quadratic differentials.

Stabilisers for the action of $\SL(2,\R)$ on $\HHH_g$ or $\QQQ_g$, called \emph{Veech groups}, are discrete non-cocompact subgroups of $\SL_2(\R)$; they are trivial (i.e. either $\{\Id\}$ or $\{\Id,-\Id\}$) for almost every surface in each stratum component, and in exceptional cases are lattices (i.e.\ finite-covolume subgroups) in $\SL_2(\R)$. In such cases, the surface satisfies the Veech dichotomy: in every direction, the linear flow is either uniquely ergodic, or decomposes the surface into a finite union of cylinders of periodic trajectories (see~\cite{Ve1}). Closed compact translation surfaces with a lattice Veech group are exactly those whose $\SL_2(\R)$-orbit is closed in the corresponding stratum component. They are called \emph{Veech surfaces}. Their orbits project to \emph{Teichm\"uller curves} in the moduli space $\MMM_g$ of closed compact Riemann surfaces of genus $g$. A translation surface is a \emph{square-tiled surface} if it is a ramified cover of the torus $\R^2/\Z^2$ with only $0$ as ramification point. Square-tiled surfaces are examples of Veech surfaces. Their Veech groups are commensurable to $\SL_2(\Z)$.

The simplest stratum besides the one of tori is $\HHH(2)$ which consists of equivalence classes of 1--forms with a double zero (in flat surfaces terms a cone point of angle $6\pi$) on Riemann surfaces of genus two. Important examples of such surfaces are given by the family of surfaces $\Lsurf(a,b)$ with $0<a<1,0<b<1$ which are built as follows (see also Figure~\ref{fig:Lab}). Let $0<a<1$ and $0<b<1$. Consider the polygon with extremal points $(0,0)$, $(1-a,0)$, $(1,0)$, $(1,1-b)$, $(1-a,1-b)$, $(1-a,1)$, $(0,1)$, $(0,1-b)$ and glue the opposite sides together:
\begin{enumerate}
\item $[(0,0),(1-a,0)]$ with $[(0,1),(1-a,1)]$ (the side $h_1$ labeled on Figure~\ref{fig:Lab}),
\item $[(1-a,0),(1,0)]$ with $[(1-a,1-b),(1,1-b)]$ (the side $h_2$),
\item $[(0,0),(0,1-b)]$ with $[(1,0),(1,1-b)]$ (the side $v_1$),
\item $[(0,1-b),(0,1)]$ with $[(1-a,1-b),(1-a,1)]$ (the side $v_2$).
\end{enumerate}

\begin{figure}[!ht]
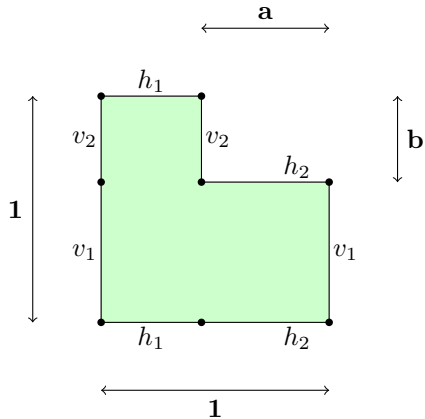

\begin{center}\picinput{Lab}\end{center}
\caption{The surface $\Lsurf(a,b)$ built from a L-shaped polygon.}
\label{fig:Lab}
\end{figure}
The stratum $\HHH(2)$ is connected and is the best understood. It was proven that the Teichm\"uller curves are generated by surfaces of the form $\Lsurf(a,b)$.
\begin{theorem}[Calta~\cite{Ca}, McMullen~\cite{Mc1,Mc2}]
The surface $\Lsurf(a,b)$ is a Veech surface if and only if
\begin{enumerate}
\item either $a,b \in \Q$ in which case $\Lsurf(a,b)$ is square-tiled,
\item or there exists $x,y \in \Q$ and $D > 1$ a square-free integer such that $1/(1-a) = x + y \sqrt{D}$ and $1/(1-b) = (1-x) + y \sqrt{D}$.
\end{enumerate}
Moreover, any Teichm\"uller curve in $\HHH(2)$ contains (up to rescaling the area) a surface of the form $\Lsurf(a,b)$.
\end{theorem}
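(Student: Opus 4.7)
The strategy is to reduce to McMullen's algebraic characterization of Teichmüller curves in $\HHH(2)$: a surface $(X,\omega) \in \HHH(2)$ is a Veech surface if and only if either it is square-tiled, or the Jacobian of $X$ admits real multiplication by the order of discriminant $D$ in a real quadratic field $\Q(\sqrt{D})$, with $\omega$ an eigenform for this action. Once this is granted, the task becomes to translate the eigenform condition into the explicit relation between $a$ and $b$.

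For the ``if'' direction, case (1) is immediate: if $a,b \in \Q$, clearing denominators exhibits $\Lsurf(a,b)$ as a union of unit squares, so it is a ramified cover of $\R^2/\Z^2$ and hence a Veech surface with Veech group commensurable to $\SL_2(\Z)$. For case (2), I would fix a symplectic basis of $H_1(\Lsurf(a,b);\Z)$ made of the cores of the two horizontal cylinders (of circumferences $1$ and $1-a$ and heights $1-b$ and $b$) together with dual vertical saddle connections, and write down the period matrix explicitly in terms of $a$ and $b$. The existence of a nontrivial self-adjoint endomorphism of the Jacobian having $\omega$ as an eigenform becomes a linear-algebra condition on this period matrix; a direct computation shows that such an endomorphism exists precisely when $1/(1-a)$ and $1/(1-b)$ have a common $y\sqrt{D}$ part together with complementary rational parts $x$ and $1-x$, matching the statement.

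For the ``only if'' direction, I would invoke the Veech dichotomy: if $\Lsurf(a,b)$ is a Veech surface, then both the horizontal and vertical directions are completely periodic, and the parabolic elements fixing them must combine to generate a lattice in $\SL_2(\R)$. The horizontal moduli are $(1-b)/1$ and $b/(1-a)$, while the vertical moduli are $1/(1-a)$ and $(1-b)/a$. Demanding commensurability of moduli in each direction together with the lattice condition on the two Dehn twist generators yields, after elimination, either $a,b \in \Q$ or the quadratic relation of case (2).

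For the ``moreover'' clause, any Veech surface in $\HHH(2)$ admits a completely periodic direction by the Veech dichotomy, and hence decomposes into two cylinders once that direction is rotated to horizontal via an element of $\SL_2(\R)$. Applying Dehn twists from the Veech group and rescaling with a further $\SL_2(\R)$ element then allows one to put the resulting polygonal representation into the L-shape $\Lsurf(a,b)$ up to area. The principal obstacle is the concrete computation in the second paragraph: identifying which eigenform condition on the period matrix corresponds to the stated arithmetic relation on $1/(1-a)$ and $1/(1-b)$, a step that requires a carefully chosen symplectic basis adapted to the L-shape and a matching of the abstract self-adjoint endomorphism with the two parabolic generators of the Veech group in the horizontal and vertical directions.
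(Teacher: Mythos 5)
The paper does not prove this theorem: it is quoted as background and attributed to Calta and McMullen, so there is no internal proof to compare your sketch against. Judged on its own terms, your outline reproduces the architecture of McMullen's argument and is essentially sound, but the weight is distributed differently from how you present it.

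Your ``only if'' direction is in fact the elementary half, and the moduli argument does close without the extra input you invoke. For a Veech surface, the horizontal and vertical directions carry saddle connections, hence are completely periodic and parabolic, hence the two horizontal moduli $1-b$ and $b/(1-a)$ are commensurable, and likewise the two vertical moduli $1-a$ and $a/(1-b)$ (you listed reciprocals, which is harmless). Writing $u=1/(1-a)$ and $v=1/(1-b)$, the two conditions read $v(u-1)\in\Q$ and $u(v-1)\in\Q$; subtracting gives $u-v\in\Q$, and substituting $u=v+r$ back shows $v^2+(r-1)v\in\Q$, so either $v\in\Q$ (case (1)) or $v=s+y\sqrt{D}$ is a quadratic irrational whose irrational part forces $r=1-2s$, i.e.\ $u=(1-s)+y\sqrt{D}$ — exactly the statement with $x=1-s$. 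So the ``lattice condition on the two Dehn twist generators'' is not needed for this direction; no elimination beyond the two commensurability relations is required. By contrast, the ``if'' direction in case (2) is where all the depth lies: your period-matrix computation only establishes that $\omega$ is an eigenform for real multiplication, and the implication ``eigenform in $\HHH(2)$ implies Veech'' is a major theorem of McMullen (it fails in $\HHH(1,1)$, where the regular decagon is essentially the only eigenform generating a Teichm\"uller curve). You do grant this up front, which keeps the sketch honest, but it should be flagged explicitly as the step that no amount of linear algebra on the period matrix replaces. For the ``moreover'' clause, you should also justify the existence of a \emph{two}-cylinder periodic direction (a one-cylinder decomposition is a priori possible in $\HHH(2)$); this is McMullen's splitting argument, after which normalizing to the unit L-shape by an element of $\GL(2,\R)$ is routine.
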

In his fundamental work, C.~McMullen~\cite{Mc3} proved a complete classification theorem for $\SL_2(\R)$-invariant measures and closed invariant set.
\begin{theorem}[McMullen, \cite{Mc3} Theorems~10.1 and 10.2 p. 440--441] \label{thm:McMullen_classification}
The only $\SL(2,\R)$-invariant irreducible closed subsets of $\HHH(2)$ are the Teichm\"uller curves and the whole stratum. The only $\SL(2,\R)$-invariant probability measures are the Haar measure carried on Teichm\"uller curves and the Lebesgue measure on the stratum.
\end{theorem}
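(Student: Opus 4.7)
The plan is to classify the closed $\SL(2,\R)$-invariant subsets first; the classification of ergodic invariant probability measures then follows from the measure theory on the resulting loci (ergodic decomposition plus uniqueness of the natural Haar measure on each locus). So let $N \subset \HHH(2)$ be a closed $\SL(2,\R)$-invariant subset with $N \neq \HHH(2)$, and aim to show that $N$ is a finite union of Teichm\"uller curves.

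The first step is to exploit horocycle dynamics and cylinder deformations inside $N$. Any surface in $\HHH(2)$ is completely periodic in a dense set of directions, with either one or two cylinders in the decomposition. Applying the horocycle flow $h_s$ to a cylinder direction produces a real-analytic arc in $N$; combined with $g_t$ and the rotation subgroup $\operatorname{SO}(2,\R)$, this gives a local three-real-dimensional piece of $N$. The point is to extract from $\SL(2,\R)$-invariance and the relation between absolute and relative periods that no \emph{fourth} direction of deformation can exist inside $N$ without forcing $N = \HHH(2)$. This is where the low dimension of $\HHH(2)$ (complex dimension $4$) is crucial: there is just barely enough room for a proper invariant locus, but any honest transverse deformation of the cylinder moduli would saturate under horocycle dynamics and escape $N$.

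The main obstacle, and the heart of the argument, is the step in which one shows that every surface $(Y,\omega) \in N$ carries \emph{real multiplication}: the Jacobian of $Y$ admits an action of an order $\mathcal{O}$ in a real quadratic field $\Q(\sqrt{D})$ by self-adjoint endomorphisms, for which $\omega$ is an eigenform. This is extracted by pairing the cylinder deformations of the previous step with the Hodge structure on $H^1(Y;\R)$: the absolute period map restricted to $N$ must land in a rational $2$-plane invariant under the Galois action, which translates into the eigenform condition. This is the deep rigidity input and cannot be done by a soft dimension count; it uses the specific structure of $\HHH(2)$ and the two-cylinder decompositions available in every periodic direction.

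Once real multiplication is established, $N$ is contained in the eigenform locus $\Omega E_D$ for some discriminant $D$. These loci are explicit codimension-$2$ $\SL(2,\R)$-invariant subvarieties, already described by Calta and McMullen in the theorem quoted above: they are foliated by Teichm\"uller discs generated by $\Lsurf(a,b)$-type surfaces satisfying the stated algebraic conditions on $a$ and $b$. A direct analysis of horocycle orbits inside $\Omega E_D$, using that eigenform surfaces admit cylinder decompositions with commensurable moduli, shows that every horocycle orbit is either periodic (producing a Veech surface, hence a Teichm\"uller curve) or equidistributes in all of $\Omega E_D$. This dichotomy implies that the only proper closed $\SL(2,\R)$-invariant subsets of $\Omega E_D$ are finite unions of Teichm\"uller curves, and that the unique $\SL(2,\R)$-invariant probability measure on $\Omega E_D$ itself is the natural Lebesgue-type measure; together with the Masur--Veech ergodicity of Lebesgue on $\HHH(2)$, this yields the stated classification.
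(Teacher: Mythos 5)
First, note that the paper does not prove this statement: it is quoted from McMullen (\cite{Mc3}, Theorems~10.1 and 10.2) and used as a black box, so there is no internal proof to compare yours against. Your proposal is a recognizable outline of McMullen's strategy for the \emph{topological} half: complete periodicity in a dense set of directions, cylinder/horocycle deformations inside $N$, the rigidity step forcing real multiplication so that a proper closed invariant set lies in an eigenform locus $\Omega E_D$, and the fact (Calta--McMullen) that $\Omega E_D$ meets $\HHH(2)$ in a finite union of Teichm\"uller curves. But the two steps you yourself flag as the heart of the matter --- that any transverse deformation of cylinder moduli ``saturates and escapes $N$,'' and that the period map restricted to $N$ lands in a Galois-invariant rational $2$-plane --- are asserted rather than proved. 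Each is a substantial argument occupying several sections of \cite{Mc3}, and your sketch gives no mechanism for carrying them out; at this resolution the proposal is a pointer to the literature, which is exactly what the paper already provides.

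The concrete gap is in the measure half. You claim the classification of ergodic invariant probability measures ``follows from the measure theory on the resulting loci (ergodic decomposition plus uniqueness of the natural Haar measure on each locus).'' It does not: measure rigidity is not a formal consequence of orbit-closure rigidity. An ergodic $\SL(2,\R)$-invariant measure need not be supported on a closed orbit, and knowing that every orbit closure is either a Teichm\"uller curve or all of $\HHH(2)$ does not exclude an exotic ergodic measure of full support distinct from Lebesgue measure. McMullen's proof of the measure statement rests on a separate classification of measures invariant under the unipotent (horocycle) subgroup --- a Ratner-type theorem in this non-homogeneous setting --- which is a major independent argument in \cite{Mc3}, not a corollary of the topological classification. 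Your final ``periodic or equidistributes'' dichotomy is a statement of exactly this kind, but you invoke it without proof and only inside $\Omega E_D$; to classify measures on all of $\HHH(2)$ you need its measure-theoretic form on the whole stratum. As it stands, the proposal would establish (modulo the two black-boxed rigidity steps) only the first sentence of the theorem, not the second.
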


Let $g \geq 2$ and $\alpha = (\alpha_1, \alpha_2, \ldots, \alpha_s)$ an integer partition of $2g-2$. The \emph{Hodge bundle} $E_g$ is the real vector bundle of dimension $2g$ over $\HHH_g(\alpha)$ where the fiber over $(S,\omega) \in \HHH_g(\alpha)$ is the real cohomology $H^1(S;\R)$.  Each fibre $H^1(X;\R)$ has a natural lattice $H^1(X;\Z)$ which allows identification of nearby fibers and definition of the Gauss-Manin (flat) connection. The holonomy along the Teichm\"uller geodesic flow provides a symplectic cocycle called the \emph{Kontsevich-Zorich cocycle}. It is formally defined as a map $B^{(t)}: H^1(X;\R) \rightarrow H^1(g_t \cdot X; \R)$. As each holonomy element corresponds to the action in homology of an element of the mapping class group of $X$, the map $B^{(t)}$ is symplectic. For a small transversal $U$ of the Teichm\"uler flow on $\MMM_g$ for which there exists a trivialization of the Hodge bundle, we identify all fibers with a fixed $H^1(X;\R)$ where $X \in U$. The sequence of first return times $0 = t_0 < t_1 < \ldots$ of $g_t \cdot X$ in $U$ gives a sequence of symplectic matrices in $H^1(X;\R)$ that we still denote $B^{(t_n)}(X)$.

For each $g_t$-invariant ergodic probability measure for the Teichm\"uller geodesic flow on $\HHH_g$, this cocycle has associated Lyapunov exponents. Based on computer experimentations, M.~Kontsevich~\cite{KZ0} conjectured a formula for the sum of positive Lyapunov exponents of the cocycle for Lebesgue measures on strata as well as for Veech surfaces. These formulas are now fully proven~\cite{EKZ1, EKZ2}.

An \emph{automorphism} of a translation surface $(S,\omega)$ is a diffeomorphism $\phi: S \rightarrow S$ that preserves $\omega$ (in other words, it acts by translations in the natural charts of $\omega$). We warn the reader that with our convention of markings, even if a point $(S,\omega)$ in $\HHH_g(\alpha)$ admits some automorphisms, it has trivial stabilizer as element of $\HHH_g(\alpha)$. In some concrete situations, as the one of the wind-tree model described in that article, the existence of automorphisms provides an $\SL_2(\R)$-equivariant splitting of the Hodge bundle. Under suitable assumptions for the $\SL(2,\R)$-subbundles (relative to variations of Hodge structure), it appears that for each of them there is a formula for the sum of positive Lyapunov exponents of the restricted Kontsevich-Zorich cocycle. Sometimes even individual Lyapunov exponents can be computed (see~\cite{BM}, \cite{FMZ}, \cite{EKZ1}). We recall a theorem of~\cite{EKZ2} which is a formula for the sum of Lyapunov exponents for the so called hyperelliptic locii of a stratum.

Quadratic differentials with at most simple poles on a Riemann surface are natural generalizations of translation surfaces. In that case, the holonomy is not necessarily trivial and may has values in $\{+1,-1\}$. On a Riemann surface with a quadratic differentials, directions are still globally defined and there is an action of $\PSL(2,\R)$. The stratum $\QQQ(d_1,\ldots,d_n)$ denotes the moduli space of area $1$ quadratic differentials with singularities of angles $(2+d_1)\pi, \ldots, (2+d_n)\pi$ which are not squares of a Abelian differentials.

Let $q$ be a quadratic differential on some Riemann surface $S$. The foliation on $S$ in some direction $\theta$ is not orientable. There is a canonical way to define a double cover $\pi: \tilde{S} \rightarrow S$ ramified at the singularities for which the degree $d_i$ is odd and for which $\pi^* q$ is the square of an Abelian differential on $\tilde{S}$. The locus of such double covers when the pair $(S,q)$ moves in its stratum gives a $\SL(2,\R)$ invariant locus in an Abelian stratum called \emph{orientation cover locus}. When, $S$ is a sphere, the orientation cover locus is called an \emph{hyperelliptic locus}. For Lyapunov exponents of hyperelliptic locii, the following general theorem holds.
\begin{theorem}[Eskin-Kontsevich-Zorich~\cite{EKZ2}, Corollary~1 p.~14] \label{thm:exp_lyap_hyp}
Let $\mu$ be an $\SL(2,\R)$-invariant ergodic probability measure on a stratum $\HHH_g(\alpha)$ of Abelian differential. Assume that $\mu$ comes from the orientation covering morphism of a $\SL(2,\R)$-invariant (regular) measure $\overline{\mu}$ on a stratum of quadratic differentials on the sphere $\QQQ(d_1,d_2,\ldots,d_n)$. Then, the sum of positive Lyapunov exponents $\nu_1 \geq \ldots \geq \nu_g$ for the measure $\mu$ is given by
\[
\nu_1 + \ldots + \nu_g = \frac{1}{4} \ \sum_{\text{j with $d_j$ odd}} \frac{1}{d_j+2}.
\]
\end{theorem}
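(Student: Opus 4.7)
The plan is to derive the formula from the general Eskin--Kontsevich--Zorich identity for the sum of positive Lyapunov exponents of the Kontsevich--Zorich cocycle on an Abelian stratum, and then eliminate the remaining Siegel--Veech constant using the companion identity on the quadratic side. The geometric input that makes this work is the orientation double cover $\pi\colon (X,\omega) \to (\C P^1, q)$: the deck involution $\iota$ satisfies $\iota^*\omega = -\omega$, so $H^{1,0}(X) = H^{1,0}_-(X)$ (the invariant part would push down to $H^{1,0}(\C P^1) = 0$), and thus all $g$ positive Lyapunov exponents of the KZ cocycle on $X$ are concentrated in the anti-invariant subbundle.

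My first step is to identify the Abelian stratum $\HHH(\alpha)$. Locally $q = z^{d}\,dz^{2}$ and $\omega^{2} = \pi^{*}q$: a singularity of odd degree $d_j \geq -1$ is a branch point of $\pi$ and lifts to a single zero of $\omega$ of order $d_j + 1$ (a regular point when $d_j = -1$), while a singularity of even degree lifts unramified to two zeros of order $d_j/2$. Riemann--Hurwitz then recovers $2g - 2 = -4 + \#\{j : d_j \text{ odd}\}$. I substitute the resulting multiset $\alpha$ into the Abelian EKZ identity
\[
\nu_1 + \cdots + \nu_g \;=\; \frac{1}{12}\sum_i \frac{\alpha_i(\alpha_i + 2)}{\alpha_i + 1} \;+\; \frac{\pi^{2}}{3}\, c_{\mathrm{area}}(\mu),
\]
and reorganise the combinatorial sum by the index $j$ using Step 1.

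Second, I would express the Siegel--Veech constant $c_{\mathrm{area}}(\mu)$ in terms of its analogue $c_{\mathrm{area}}(\overline{\mu})$ on the quadratic side. Each maximal cylinder of $q$ on $\C P^{1}$ lifts to either one connected cylinder of $\omega$ on $X$ (when the $\sqrt{q}$-holonomy around its core curve is non-trivial) or two isomorphic copies, and a careful accounting of perimeters, areas and covering degrees yields an explicit proportionality between the two constants. To obtain $c_{\mathrm{area}}(\overline{\mu})$ in closed form I then apply the EKZ identity on $\QQQ(d_1, \ldots, d_n)$: since the base has genus zero the corresponding sum of positive exponents vanishes identically, and the identity collapses to an explicit formula for $c_{\mathrm{area}}(\overline{\mu})$ in the combinatorics of the $d_j$ alone.

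Substituting this back into the Abelian EKZ identity, the contributions coming from even-degree $d_j$ cancel between the two sides, and the terms attached to odd-degree $d_j$ collapse from $\tfrac{(d_j+1)(d_j+3)}{12(d_j+2)}$ plus their sphere-side counterparts to $\tfrac{1}{4(d_j+2)}$, producing the stated formula. The main obstacle is the third paragraph: fixing the precise proportionality between $c_{\mathrm{area}}(\mu)$ and $c_{\mathrm{area}}(\overline{\mu})$, in which the covering degree, the split-versus-fused cylinder dichotomy on $X$ and the area normalisations all interact. Once that constant is pinned down, the collapse of the combined identity to a sum involving only odd-degree singularities is forced algebraically.
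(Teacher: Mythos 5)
This statement is not proved in the paper at all: it is imported verbatim from Eskin--Kontsevich--Zorich (the cited Corollary~1 of \cite{EKZ2}), so there is no internal proof to compare against. What you have written is, in effect, a reconstruction of how EKZ themselves deduce that corollary from their two master identities, and the reconstruction is structurally sound. Your local analysis of the orientation double cover is correct (odd $d_j$ branches and lifts to one zero of order $d_j+1$, even $d_j$ lifts to two zeros of order $d_j/2$, and $H^1_+$ vanishes over a genus-zero base so all exponents live in $H^1_-$), and the final cancellation is exactly as you say: since $(d_j+1)(d_j+3)=d_j(d_j+4)+3$, the odd-degree combinatorial terms exceed their sphere-side counterparts by $\tfrac{3}{12(d_j+2)}=\tfrac{1}{4(d_j+2)}$, while the even-degree terms match and must cancel against the Siegel--Veech contributions.

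The gap is that the argument is doubly conditional. First, it black-boxes the two EKZ main formulas (the Abelian identity $\sum\nu_i = \tfrac{1}{12}\sum\tfrac{\alpha_i(\alpha_i+2)}{\alpha_i+1} + \tfrac{\pi^2}{3}c_{\mathrm{area}}(\mu)$ and its quadratic analogue), which are the actual content of \cite{EKZ2}; deriving one corollary of that paper from its two main theorems is legitimate but is not an independent proof. Second, and more seriously for the logic of your own outline, the proportionality constant between $c_{\mathrm{area}}(\mu)$ on the cover and $c_{\mathrm{area}}(\overline{\mu})$ on the sphere is left undetermined, and the entire cancellation of the even-degree terms is equivalent to that constant taking one specific value: with the normalizations above one needs $\tfrac{\pi^2}{3}c_{\mathrm{area}}(\mu) = -\tfrac{1}{12}\sum_j\tfrac{d_j(d_j+4)}{d_j+2}$, i.e.\ precisely twice what the genus-zero quadratic identity gives for $\overline{\mu}$. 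Any other factor destroys the formula, so until the cylinder-lifting count (split versus fused cylinders, doubled areas, doubled total area) is carried out, the conclusion does not follow from what you have written.
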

In particular the value of the sum does not depend on the measure but only on the stratum $\QQQ(d_1,d_2,\ldots,d_n)$. For the condition of \emph{regular measure} which appears in the statement of Theorem~\ref{thm:exp_lyap_hyp} we refer to Definition 1 p. 9 of~\cite{EKZ2}. We emphasise that all known $\SL(2,\R)$-ergodic measures on strata of Abelian differentials are regular.


For infinite-area translation surfaces, it is not clear what the good notions of moduli spaces are. However, the action of $\SL_2(\R)$ still makes sense, and Veech groups can be defined~\cite{Va1,Va2}. An \emph{infinite periodic translation surface} is an infinite area translation surface which is an infinite normal cover of a finite area translation surface. We say $\Gamma$-infinite translation surface to specify the Deck group $\Gamma$. Examples of $\Z$-infinite translation surfaces are studied by P.~Hubert and G.~Schmith\"usen in~\cite{HS} and a general formalism is introduced by P.~Hooper and B.~Weiss in~\cite{HoWe}. For some particularly symmetric examples, it is possible to get a very complete picture of the dynamics~\cite{HoHuWe}. The family of surfaces $\Xsurf_\infty(a,b)$ obtained by unfolding the billiard tables $T(a,b)$ are $\Z^2$-infinite translation surfaces.

\section{From infinite billiard table to finite surface} \label{section:from_infinite_billiard_to_finite_surface}
First of all, the flow in the billiard table $\Tsurf(a,b)$ is invariant under $\Z^2$ translation. Secondly, the angles between the scatterers are multiples of $\pi/2$ and the Katok-Zemliakov construction conjugates the billiard flow on $T(a,b)$ to a linear flow on an infinite translation surface $\Xsurf_\infty(a,b)$. Using these two ingredients, we reduce the study of the billiard flow into the study of a $\Z^2$-cocycle over the liner flow of a finite translation surface $\Xsurf(a,b)$. The surface $\Xsurf(a,b)$ obtained by unfolding a fundamental domain of the table $\Tsurf(a,b)$ is an intermediate cover between the finite surface $\Lsurf(a,b)$ of genus $2$ and the infinite surface $\Xsurf_\infty(a,b)$. The surface $\Xsurf(a,b)$ is the main actor of this paper.

\textbf{Notation:} For the whole section, we fix $0 < a < 1$ and $0 < b <1$.

\subsection{Unfolding the fundamental domain} \label{subsection:unfolding_the_fundamental_domain}
A fundamental domain for the $\Z^2$ action on the infinite billiard $T(a,b)$ can be seen either as a torus with a square obstacle inside (see Figure~\ref{fig:fundamental_domain_torus}) or as a surface $\Lsurf = \Lsurf(a,b)$ with barriers on its boundary (see Figure~\ref{fig:fundamental_domain_L}).
\begin{figure}[!ht]
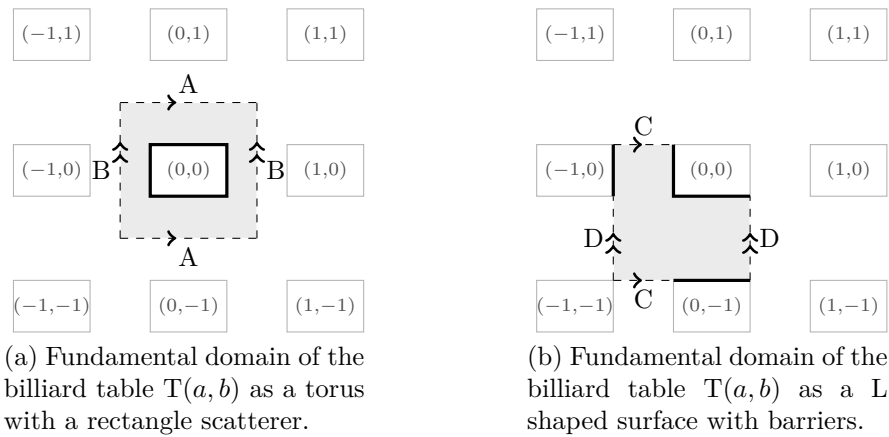

\centering
\subfloat[Fundamental domain of the billiard table $\Tsurf(a,b)$ as a torus with a rectangle scatterer.]{%
\label{fig:fundamental_domain_torus}%
\picinput{fundamental_domain_torus}%
}
\hspace{2cm}
\subfloat[Fundamental domain of the billiard table $\Tsurf(a,b)$ as a L shaped surface with barriers.]{%
\label{fig:fundamental_domain_L}%
\picinput{fundamental_domain_L}%
}
\caption{Two versions of the fundamental domains for the billiard table $\Tsurf(a,b)$. The boundaries of the scatterers are thick and the arrows together with letters indicate the gluings.}
\label{fig:fundamental_domains}
\end{figure}
The Katok-Zemliakov construction (or unfolding procedure) of the billiard in the fundamental domain gives a surface $\Xsurf(a,b)$ made of $4$ reflected copies of the fundamental domain (see Figure~\ref{fig:unfolding_the_billiard}). The surface $\Xsurf(a,b)$ was studied in the particular case $a=b=1/2$ by different authors \cite{LS}, \cite{S}, \cite{FMZ}, \cite{EKZ1} and is called in this particular case the $6$-escalator (see Figure~\ref{fig:unfolding_the_billiard_L} for the origin of the name).
\begin{lemma} \label{lem:X_covers_L}
\label{lemma:X_is_normal_over_L}
The surface $\Xsurf(a,b)$ is a genus $5$ surface in $\HHH(2^4)$. It is a normal unramified cover of the surface $\Lsurf(a,b)$ with a Deck group $K$ isomorphic to the Klein four-group $K=\Z/2 \times \Z/2$.
\end{lemma}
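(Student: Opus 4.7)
The plan is to describe the cover $\Xsurf(a,b) \to \Lsurf(a,b)$ concretely as the union of four sheets indexed by $K = \Z/2 \times \Z/2$, to prove that the natural $K$-action on these sheets is free, and to read off the genus and the stratum by Riemann--Hurwitz.

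First I would label the four copies of the L-polygon by pairs $\epsilon = (\epsilon_1, \epsilon_2) \in K$, where $\epsilon_1$ (respectively $\epsilon_2$) records the parity of the number of vertical (respectively horizontal) barrier reflections a trajectory has undergone. The outer sides $h_1$ and $v_1$ would be glued within each copy exactly as in $\Lsurf(a,b)$, while the barrier side $h_2$ of copy $\epsilon$ would be glued to the matching $h_2$ of copy $\epsilon + (0,1)$ and the barrier side $v_2$ of copy $\epsilon$ to $v_2$ of copy $\epsilon + (1,0)$. This is the Katok--Zemliakov unfolding of $\Lsurf(a,b)$ across its barriers; because every scatterer corner is a right angle, the reflection group generated is the abelian group $K$ and all resulting edge identifications reduce to translations, so $\Xsurf(a,b)$ is a bona fide translation surface. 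The $K$-action $k \cdot (p, \epsilon) = (p, \epsilon + k)$ preserves this translation structure, permutes the four sheets transitively, and has $\Lsurf(a,b)$ as quotient; this makes $\Xsurf(a,b) \to \Lsurf(a,b)$ a normal cover of degree $4$ with Deck group $K$.

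The heart of the argument is to show this action is free, so that the cover is unramified. On the interior of any sheet freeness is immediate, and at barrier-interior points it still follows by inspection since the barrier identifications only pair two specific sheets at a time. The delicate locus is the unique cone point of $\Lsurf(a,b)$: the eight vertices of the L-polygon all identify to it, and several barriers emanate from them. I would verify freeness here by tracing the gluing orbit of the vertex $(1-a, 1-b)$ of sheet $(0,0)$ under the identifications of $\Xsurf(a,b)$; alternately following the barrier and non-barrier identifications produces a cycle of length eight visiting all four sheets and returning to its starting vertex, and the analogous cycles starting from the three other vertices $(1-a, 1-b)_\epsilon$ are permuted freely by $K$. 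Hence the cone point of $\Lsurf(a,b)$ has exactly four preimages, forming a single free $K$-orbit. I expect this orbit bookkeeping to be the main technical obstacle of the proof.

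Once unramifiedness is established, Riemann--Hurwitz for a degree-$4$ unramified cover of a genus-$2$ surface yields
\[
2 g_{\Xsurf} - 2 = 4\,(2 g_{\Lsurf} - 2) = 8,
\]
so $g_{\Xsurf} = 5$. Finally, the unique double zero of the abelian differential on $\Lsurf(a,b)$ lifts to four distinct double zeros on $\Xsurf(a,b)$, placing the surface in the stratum $\HHH(2,2,2,2) = \HHH(2^4)$, as claimed.
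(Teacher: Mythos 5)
Your proposal is correct and follows essentially the same route as the paper, which likewise obtains $\Xsurf(a,b)$ from the reflection symmetries of the fundamental domain and then checks directly (via the figure) that the cone point of $\Lsurf(a,b)$ has four preimages of angle $6\pi$, forcing the cover to be unramified, the genus to be $5$, and the stratum to be $\HHH(2^4)$. You simply spell out the vertex-cycle bookkeeping and the Riemann--Hurwitz count that the paper leaves as ``straightforward'' and ``a direct computation.''
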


\begin{proof}
The billiard table $\Tsurf(a,b)$ is invariant under horizontal and vertical reflections as well as the billiard in a fundamental domain. It is then straightforward to show that $\Xsurf(a,b)$ is an unramified normal cover of $\Lsurf(a,b)$ with group $\Z/2 \times \Z/2$. A direct computation shows that $\Xsurf(a,b)$ has $4$ singularities of angle $6 \pi$ (see Figure~\ref{fig:unfolding_the_billiard}).
\end{proof}

\begin{figure}[!ht]
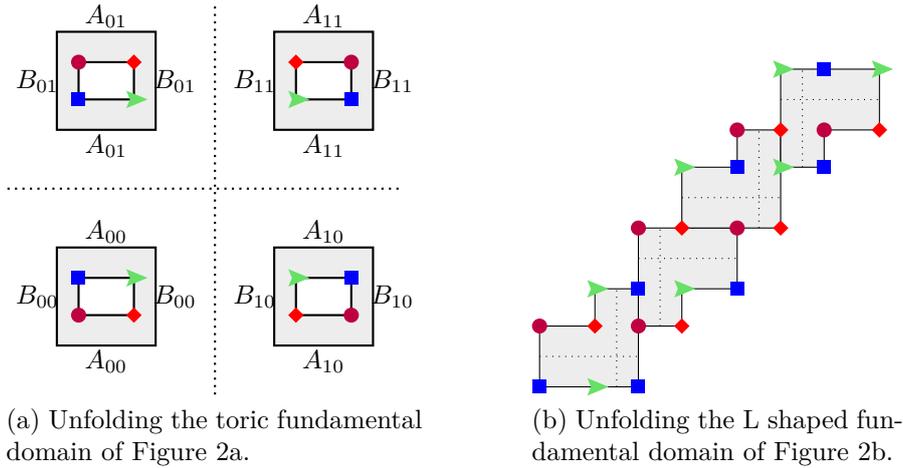

\centering
\subfloat[Unfolding the toric fundamental domain of Figure~\ref{fig:fundamental_domain_torus}.]{%
\label{fig:unfolding_the_billiard_torus}%
\picinput{unfolding_fundamental_domain_torus}%
}
\hspace{1.3cm}
\subfloat[Unfolding the L shaped fundamental domain of Figure~\ref{fig:fundamental_domain_L}.]{%
\label{fig:unfolding_the_billiard_L}%
\picinput{unfolding_fundamental_domain_L}
}
\caption{Two versions of the surface $\Xsurf(a,b)$ obtained by unfolding the billiard in a fundamental domain. The gluings of edges are indicated by labels in case of ambiguity.}
\label{fig:unfolding_the_billiard}
\end{figure}

\subsection{The surface $\Xsurf_\infty(a,b)$ as a $\Z^2$ cover of $\Xsurf(a,b)$}
\label{section:Xinfty_as_a_Z2_cover}
As we did for unfolding the fundamental domain of the infinite billiard, we consider the unfolding of the whole billiard table $\Tsurf(a,b)$. The unfolding leads to a non compact surface that we denote $\Xsurf_\infty(a,b)$ which is made of four copies of the initial billiard. As the unfolding commutes with the action of $\Z^2$ the surface $\Xsurf(a,b)$ is also the $\Z^2$ quotient of $\Xsurf_\infty(a,b)$. We use this description to rewrite the distance in Theorem~\ref{thm:divergence_rate_billiard} as a pairing between a geodesic segment in $\Xsurf(a,b)$ with a cocycle in $H^1(\Xsurf;\Z^2)$.

\begin{wrapfigure}{r}{6cm}
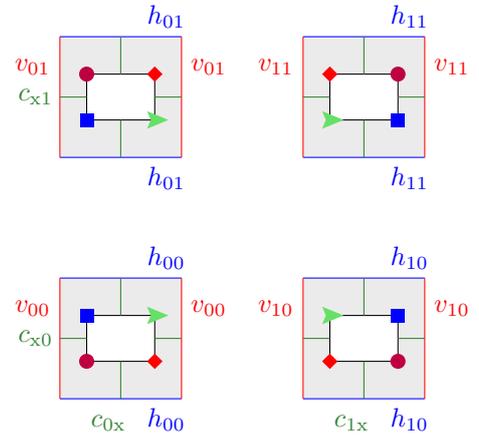

\centering
\picinput{standard_H1_basis_for_X}
\caption{Homology generators for $\Xsurf(a,b)$.}
\label{fig:homology_generators_of_X}
\end{wrapfigure}
We first build a system of generators for the homology of $\Xsurf(a,b)$. We label each copy of the torus fundamental domain in $\Xsurf(a,b)$ by $00$, $01$, $10$ and $11$ (see Figure~\ref{fig:unfolding_the_billiard_torus} and \ref{fig:homology_generators_of_X}). For $\kappa \in \{00,10,01,11\}$ let $h_\kappa$ (resp. $v_\kappa$) be the horizontal (resp. vertical) simple closed curve that delimit each copy (the exterior boundary). The curves $h_\kappa$ (resp. $v_\kappa$) have holonomy $1$ (resp. $i$). The automorphism group $K \simeq \Z/2 \times \Z/2$ of $\Xsurf(a,b)$ acts on the indices of $h_\kappa$ and $v_\kappa$ by addition (where we consider $0$ and $1$ as elements of $\Z/2$). The intersection form $\langle .,. \rangle$ on $\Xsurf(a,b)$ is such that $\langle h_\kappa, v_{\kappa'} \rangle = \delta_{\kappa,\kappa'}$ where $\delta_{ij}$ is the Kronecker symbol. In other words, the module generated by the elements $h_\kappa$ and $v_\kappa$ is a symplectic submodule and $\{(h_\kappa,v_\kappa)\}_\kappa$ is a symplectic basis. Moreover, this $\Z$-submodule is invariant under the action of $K$ (but not irreducible, see Lemma~\ref{lem:homology_splitting} below).

We consider four more elements of $H_1(\Xsurf; \Z)$. Let $c_{\mathrm{x}0}$ (resp. $c_{\mathrm{x}1}$)  be the circumferences of the horizontal cylinder that intersects the two copies $00$ and $10$ (resp. $01$ and $11$) of the torus fundamental domain. The curves $c_{\mathrm{x}0}$ and $c_{\mathrm{x}1}$ have both holonomy $(2-2a,0)$. We define as well the curves $c_{0\mathrm{x}}$ and $c_{1\mathrm{x}}$ with respect to the vertical cylinders. The curves $c_{0\mathrm{x}}$ and $c_{1\mathrm{x}}$ have both holonomy $(0,2-2b)$. As before the action of $K$ as automorphism group of $\Xsurf(a,b)$ corresponds to an action on indices of $c_{ij}$ if we set $0 \cdot \mathrm{x} = 1 \cdot \mathrm{x}=\mathrm{x}$.

There are two relations in $H_1(\Xsurf;\Z)$ among the curves defined above.
\begin{equation}
\begin{array}{l}
c_{\mathrm{x}0} - c_{\mathrm{x}1} = h_{00} - h_{01} + h_{10} - h_{11} \\
c_{0\mathrm{x}} - c_{1\mathrm{x}} = v_{00} - v_{10} + v_{01} - v_{11}
\label{eq:relation_for_homology_basis}
\end{array}.
\end{equation}
We have the following elementary
\begin{lemma}
The relations (\ref{eq:relation_for_homology_basis}) are the only relations in the family $\{h_{ij},v_{ij},c_{\mathrm{x}j},c_{i\mathrm{x}}\}$. Let $E_h$ (resp. $E_v$) be the span of $\{h_{ij},c_{\mathrm{x}k}\}_{i,j,k \in \{0,1\}}$ in $H_1(\Xsurf(a,b);\Z)$ (res. of $\{v_{ij},c_{k\mathrm{x}}\}_{i,j,k \in \{0,1\}}$ in $H_1(\Xsurf(a,b);\Z)$) then $H_1(\Xsurf(a,b);\Z) = E_h \oplus E_v$ and the sum is orthogonal with respect to the intersection form.
\end{lemma}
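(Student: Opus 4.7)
The plan is to prove the three assertions in turn: verify the relations~(\ref{eq:relation_for_homology_basis}), show they are the only independent relations in the given family, and establish the direct-sum decomposition $H_1(\Xsurf(a,b);\Z) = E_h \oplus E_v$ together with the isotropy that underlies the orthogonality statement. To verify the relations, I would exhibit an explicit $2$-chain whose boundary realizes the claimed identity. For the first equation, a natural candidate is the union of four horizontal rectangular regions in the four copies, each lying between a horizontal cylinder core curve and the horizontal exterior boundary of that copy; summing with the signs dictated by the orientations of the copies, the internal scatterer edges cancel in pairs due to the Katok-Zemliakov gluing, leaving $\pm(c_{\mathrm{x}0}-c_{\mathrm{x}1})$ on the cylinder side and $\pm(h_{00}-h_{01}+h_{10}-h_{11})$ on the exterior side. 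The second relation follows from the same construction after exchanging horizontal and vertical.

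For the rank count, by Lemma~\ref{lem:X_covers_L} the surface $\Xsurf(a,b)$ has genus $5$, so $H_1$ has rank $10$. The submodule $V = \langle h_{ij}, v_{ij}\rangle$ is a rank-$8$ symplectic submodule with symplectic basis $(h_\kappa,v_\kappa)_\kappa$, as recalled in the paragraph preceding the lemma, so its symplectic orthogonal complement $V^\perp$ has rank $2$. From the geometric description of the horizontal cylinder traversing copies $00$ and $10$, one computes $\langle c_{\mathrm{x}0}, v_\kappa\rangle = \pm 1$ for $\kappa\in\{00,10\}$ and $0$ for $\kappa\in\{01,11\}$, while $\langle c_{\mathrm{x}0}, h_\kappa\rangle = 0$ for all $\kappa$ (two parallel horizontal curves admit disjoint representatives). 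Matching these intersection numbers against the symplectic basis of $V$, a suitable sign choice gives $c_{\mathrm{x}0} - (h_{00}+h_{10}) \in V^\perp$; its holonomy is $(-2a, 0)$, which is nonzero since $0<a<1$, so this class is itself nonzero in $V^\perp$. The analogous argument for $c_{0\mathrm{x}}$ produces a nonzero class in $V^\perp$ with purely vertical holonomy $(0,-2b)$. These two classes have holonomies in independent directions, and $V^\perp$ has rank $2$, so they form a basis. Consequently the family $\{h_{ij},v_{ij},c_{\mathrm{x}k},c_{k\mathrm{x}}\}$ has rank exactly $10$ in $H_1$, which forces exactly two independent relations -- necessarily those of~(\ref{eq:relation_for_homology_basis}).

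For the decomposition, both $E_h$ and $E_v$ have rank $5$: four contributed by the $h_{ij}$ or $v_{ij}$ sitting inside $V$, plus one contributed by the $c$-curves modulo the single relation, which lives in $V^\perp$. The intersection $E_h\cap E_v$ is trivial: inside $V$, the spans of $\{h_{ij}\}$ and $\{v_{ij}\}$ meet only in $\{0\}$, while inside $V^\perp$ one summand contributes the horizontal direction and the other contributes the vertical direction, and these are independent. Hence $H_1 = E_h\oplus E_v$ as $\Z$-modules. Finally, each summand is isotropic with respect to the intersection form: every generator of $E_h$ is a horizontal closed curve (an $h_{ij}$ or a cylinder core $c_{\mathrm{x}k}$), and any two such curves are parallel and admit disjoint representatives, giving $\langle E_h,E_h\rangle = 0$; symmetrically, $\langle E_v,E_v\rangle = 0$. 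This is the orthogonality content of the lemma: $E_h$ and $E_v$ are complementary Lagrangian submodules of the symplectic module $H_1(\Xsurf(a,b);\Z)$.

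The main obstacle will be the explicit construction of the $2$-chains realizing~(\ref{eq:relation_for_homology_basis}) and the careful bookkeeping of signs and orientations in both these chains and in the intersection-number computations; once these concrete geometric facts are pinned down, the rank count and the Lagrangian structure follow from standard symplectic linear algebra.
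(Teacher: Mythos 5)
The paper offers no proof of this lemma (it is simply labelled ``elementary''), so I can only judge your argument on its own terms. Your overall strategy is the right one: split $H_1$ as $V\oplus V^{\perp}$, where $V$ is the unimodular symplectic sublattice spanned by the $h_\kappa,v_\kappa$, locate the cylinder classes $c_{\mathrm{x}k},c_{k\mathrm{x}}$ modulo $V$ via their intersection numbers with the symplectic basis, and count ranks. Your isotropy argument (every generator of $E_h$ is a closed regular horizontal leaf, so any two admit disjoint representatives) is correct, and your reading of ``orthogonal'' as ``each summand is Lagrangian'' is the only consistent one, since $\langle h_{00},v_{00}\rangle=1$ rules out the literal $\langle E_h,E_v\rangle=0$.

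There is, however, a genuine gap: the lemma is a statement about $\Z$-modules, and as written your argument only proves it after tensoring with $\Q$. Setting $w_h=c_{\mathrm{x}0}-(h_{00}+h_{10})$ and $w_v=c_{0\mathrm{x}}-(v_{00}+v_{01})$, you show these lie in $V^{\perp}$ and are independent because their holonomies $(-2a,0)$ and $(0,-2b)$ are; but $E_h\oplus E_v=V\oplus(\Z w_h+\Z w_v)$, so to conclude $E_h\oplus E_v=H_1(\Xsurf(a,b);\Z)$ you need $w_h,w_v$ to generate $V^{\perp}$ over $\Z$, not merely to span $V^{\perp}\otimes\Q$; holonomy independence alone only gives that $E_h\oplus E_v$ has finite index in $H_1$. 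The fix is short: $V^{\perp}$ is itself unimodular (it is the orthogonal complement of a unimodular sublattice inside the unimodular lattice $H_1$), so it suffices to verify $\langle w_h,w_v\rangle=\pm 1$, which follows from the intersection numbers you already computed together with $\langle c_{\mathrm{x}0},c_{0\mathrm{x}}\rangle=0$ and $\langle h_{00},c_{0\mathrm{x}}\rangle=\pm 1$. A smaller integral point of the same kind occurs in ``which forces exactly two independent relations --- necessarily those of~(\ref{eq:relation_for_homology_basis})'': a rank count shows the relation module has rank $2$ and contains the two given relations, but you should add that these two relation vectors have disjoint supports and entries $\pm 1$, hence span a saturated sublattice and therefore generate all integral relations. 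With these two additions the proof is complete.
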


The infinite cover $\Xsurf_\infty(a,b) \rightarrow \Xsurf(a,b)$ corresponds to a certain subgroup $H$ of $\pi_1(\Xsurf(a,b))$ such that $\pi_1(\Xsurf(a,b)) / H \simeq \Z^2$. But as the cover is normal and $\Deck(\Xsurf_\infty(a,b)/\Xsurf(a,b)) \simeq \Z^2$ is an Abelian group, there exists a factorisation through the Abelianisation $H_1(\Xsurf(a,b); \Z)$ of $\pi_1(\Xsurf(a,b))$  (see also \cite{HoWe} for the description of $\Z$-cover). In other terms the cover is defined by an element of $H^1(\Xsurf(a,b);\Z^2)$ and more precisely we have the following explicit description.
\begin{lemma} \label{lem:vector_of_the_infinite_cover}
The $\Z^2$ covering $\Tsurf(a,b)/\Xsurf(a,b)$ is given by the dual $f \in H^1(X; \Z^2)$ with respect to the intersection form of the cycle
\[
\begin{pmatrix}
v_{00} - v_{10} + v_{01} - v_{11} \\
h_{00} - h_{01} + h_{10} - h_{11}
\end{pmatrix}
\in H_1(X; \Z^2).
\]
In other words, the subgroup of $\pi_1(\Xsurf(a,b))$ associated to the covering is the kernel of
\[
 \pi_1(\Xsurf(a,b)) \xrightarrow[]{Ab}
 H_1(\Xsurf(a,b);\Z) \xrightarrow[]{f}
 \Z^2.
\]
\end{lemma}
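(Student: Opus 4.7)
Since $\Xsurf_\infty(a,b) \to \Xsurf(a,b)$ is a normal abelian cover with deck group $\Z^2$, it is classified by a surjective homomorphism $\phi: H_1(\Xsurf(a,b);\Z) \to \Z^2$. Poincar\'e duality for the closed oriented genus-5 surface $\Xsurf(a,b)$ identifies such a homomorphism with a unique cycle $f \in H_1(\Xsurf(a,b);\Z^2)$ via the intersection pairing. The plan is to compute $\phi$ geometrically on the twelve generators of $H_1$ listed in the preceding lemma, and then to verify that the stated $f$ has the correct intersection numbers with each of them.

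The cover $\Xsurf_\infty(a,b) \to \Xsurf(a,b)$ is obtained by unfolding the natural $\Z^2$-cover $\Tsurf(a,b) \to \Tsurf(a,b)/\Z^2$ (torus with obstacle), so $\phi$ is the pullback of the torus holonomy: writing $\pi: \Xsurf(a,b) \to \Tsurf(a,b)/\Z^2$ for the natural $4$-to-$1$ projection, one has
\[
\phi(\gamma) = \Bigl(\int_\gamma \pi^* dx,\ \int_\gamma \pi^* dy\Bigr) \in \Z^2
\]
for every loop $\gamma$. Labelling the four copies of the torus fundamental domain by $\kappa = (i,j) \in \{0,1\}^2$, with $i$ (resp. $j$) recording the parity of horizontal (resp. vertical) reflections needed to reach copy $\kappa$ from copy $(0,0)$, the unfolding reflection forces $\pi^* dx = (-1)^i dx$ and $\pi^* dy = (-1)^j dy$ on copy $(i,j)$.

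Applied to the generators, this gives $\phi(h_{ij}) = ((-1)^i,0)$, $\phi(v_{ij}) = (0,(-1)^j)$ and $\phi(c_{\mathrm{x}j}) = \phi(c_{i\mathrm{x}}) = (0,0)$. The vanishing on the cylinder cycles has a direct geometric meaning: $c_{\mathrm{x}j}$ and $c_{i\mathrm{x}}$ are unfoldings of periodic billiard trajectories bouncing between two adjacent obstacles in $\Tsurf(a,b)$, so they carry no net $\Z^2$-translation. These values are consistent with the two homology relations (\ref{eq:relation_for_homology_basis}) and show that $\phi$ is surjective.

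The last step is to check that the claimed $f$ pairs correctly against every generator. Using the symplectic data $\langle h_\kappa,v_{\kappa'}\rangle = \delta_{\kappa,\kappa'}$, the orthogonality $E_h\perp E_v$, and the direct intersections $\langle c_{\mathrm{x}j},v_{ij'}\rangle = \delta_{j,j'}$ and $\langle c_{i\mathrm{x}},h_{i'j}\rangle = -\delta_{i,i'}$ (the minus sign coming from the antisymmetry of the intersection form), one computes $\langle f,\gamma\rangle$ on each of the twelve generators and matches it with the values of $\phi$ listed above. The main obstacle is purely bookkeeping: the horizontal and vertical components of $f$ involve generators of different types, so one must fix the orientation conventions carefully so that the signs agree on both components simultaneously. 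The essential geometric content—namely the vanishing of $\phi$ on the cylinder cycles and the $(-1)^i$-pattern on the $h_{ij}, v_{ij}$—follows immediately from the reflection structure of the unfolding, and determines $f$ uniquely once duality is set up.
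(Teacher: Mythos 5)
Your argument is correct and follows essentially the same route as the paper's: both compute the $\Z^2$-monodromy of the cover on the homology generators by tracking how the unfolding reflections act on the four copies of the fundamental domain, and then identify $f$ as the Poincar\'e dual of the resulting homomorphism $H_1(\Xsurf(a,b);\Z)\to\Z^2$. If anything you are slightly more thorough than the paper, which only treats the curves $h_{ij}$ and $v_{ij}$ and leaves the vanishing on the cylinder classes $c_{\mathrm{x}j}$, $c_{i\mathrm{x}}$ and the final duality verification implicit.
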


\begin{proof}
As before we consider the surface decomposed into four copies of the torus fundamental domain labeled $00$, $10$, $01$ and $11$. The labeling fits the action of the Klein $4$ group on the surface. Let $\gamma$ be a smooth curve in $\Tsurf(a,b)$ which follows the law of reflection when it hits an obstacle. Let $\overline{\gamma}$ its image in $\Xsurf(a,b)$. There is an ambiguity for the starting point of $\overline{\gamma}$ and we assume that we start in the copy $00$. Each time the curve $\overline{\gamma}$ hit a side associated to a vertical (resp. horizontal) scatter the curve $\overline{\gamma}$ switches from the copy $\kappa$ to $(1,0) \cdot \kappa$ (resp. $(0,1) \cdot \kappa$). At the same time, in the infinite table $\Tsurf(a,b)$ the curve $\gamma$ is reflected vertically (resp. horizontally). When the curve crosses a vertical (resp. an horizontal) boundary of the fundamental domain (labelled A (resp. B) in Figure~\ref{fig:fundamental_domain_torus}) the curve $\overline{\gamma}$ remains in the same copy. In other words, the endpoint of $\gamma$ in $T(a,b)$ only depends on the monodromy of $\overline{\gamma}$ with respect to $\Xsurf_\infty/\Xsurf$ and we need to consider only the case of the curves $\gamma=h_{ij},v_{ij}$ for $i=0,1$ and $j=0,1$.

As the copies $00$ and $01$ in $\Xsurf(a,b)$ corresponds to the absence of vertical reflection, the monodromy of $v_{00}$ and $v_{01}$ is $(1,0)$. Whereas for the copies $10$ and $11$, the curve $\gamma$ has been reflected and the monodromy of $h_{10}$ and $h_{11}$ is $(-1,0)$. The same analysis can be made for the curves $v_{ij}$ and the lemma follows from duality between $\{h_\kappa\}$ and $\{v_\kappa\}$.
\end{proof}
Now, we use the description of $\Xsurf_\infty(a,b) \rightarrow \Xsurf(a,b)$ in terms of homology to approximate the distance $d(p,\phi_t^\theta(p))$ of Theorem~\ref{thm:divergence_rate_billiard} in terms of intersection. But first of all, we need to approximative geodesic segment by elements of $H_1(\Xsurf,\Z)$.

For each triple $(p,\theta,t) \in \Xsurf \times S^1 \times \R_+$ we define an element $\gamma^\theta_t(p) \in H_1(\Xsurf;\Z)$ as follows. Consider the geodesic segment of length $t$ from $p$ in the direction $\theta$ and close it by a small piece of curve that does not intersect any curves $h_\kappa$ nor $v_\kappa$. The curve used to close the geodesic segment can be chosen to be uniformly bounded.

The proposition below shows that the distance of the particle in the billiard $\Tsurf(a,b)$ can be reduced to the study of the pairing of the approximative geodesic $\gamma^\theta_t(p)$ in $\Xsurf(a,b)$ and the cocycle $f \in H^1(X; \Z^2)$ defined in Lemma~\ref{lem:vector_of_the_infinite_cover}.
\begin{proposition}
Let $\|.\|_2$ be the Euclidean norm on $\R^2$. Let $p \in \Xsurf(a,b)$ and $\tilde{p} \in T(a,b)$ the lift of $p$ which belongs to the translate of the fundamental domain that contains the point $(0,0)$. Let $f \in H^1(X;\Z^2)$ be as in the previous lemma. Then
\[
\left\|\left\langle f, \gamma^\theta_t(p) \right\rangle - \phi_t^\theta(\tilde{p})\right\|_2 \leq \sqrt{2}.
\]
In particular
\[
\left| \| \langle f, \gamma^\theta_t(p) \rangle \|_2 - d(\tilde{p},\phi^\theta_t(\tilde{p}))\right| \leq \sqrt{2}.
\]
\end{proposition}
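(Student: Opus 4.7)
The plan is to identify $\langle f,\gamma^\theta_t(p)\rangle \in \Z^2$ with the integer translation relating the fundamental cell in which $\tilde p$ starts to the cell in which $\phi^\theta_t(\tilde p)$ lands, and then bound the intra-cell error by the diameter $\sqrt2$ of a unit square. The input is precisely Lemma~\ref{lem:vector_of_the_infinite_cover}, which expresses the $\Z^2$-cover $\Xsurf_\infty(a,b) \to \Xsurf(a,b)$ as the kernel of the map $\pi_1(\Xsurf(a,b))\to\Z^2$ given by intersection with $f$.

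First, I would decompose the chain $\gamma^\theta_t(p)=\sigma+c$, where $\sigma$ is the actual geodesic segment of length $t$ in direction $\theta$ starting at $p$, and $c$ is the short closing arc specified in the construction. Since the cycles representing the two components of $f$ lie entirely in the $\Z$-span of $\{h_\kappa,v_\kappa\}_\kappa$, and $c$ was chosen not to meet any $h_\kappa$ or $v_\kappa$, the intersection $\langle f,c\rangle$ vanishes; hence $\langle f,\gamma^\theta_t(p)\rangle=\langle f,\sigma\rangle$, which is independent of the choice of closing arc and is an honest integer vector.

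Next, I would transport this interpretation to $\Xsurf_\infty(a,b)$. Lift $\sigma$ to a straight geodesic segment $\tilde\sigma$ in $\Xsurf_\infty(a,b)$ starting at the preferred lift $\tilde p$ in the $(0,0)$-copy; by the Katok--Zemliakov unfolding, the projection of $\tilde\sigma$ to $\R^2$ coincides with $s\mapsto\phi^\theta_s(\tilde p)$. By Lemma~\ref{lem:vector_of_the_infinite_cover}, the Deck transformation separating the copy containing $\tilde p$ from the copy containing the endpoint $\tilde q$ of $\tilde\sigma$ is exactly $\langle f,\sigma\rangle\in\Z^2$; equivalently, $\phi^\theta_t(\tilde p)$ lies in the lattice translate $\langle f,\sigma\rangle+F$ of the fundamental cell $F\subset[0,1]^2$. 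This is the one point where care is required: the monodromy computation must correctly track the $\pm 1$ sign flips produced by reflections in the four copies indexed by $K\simeq \Z/2\times\Z/2$, and this is precisely what the cycle $f$ from Lemma~\ref{lem:vector_of_the_infinite_cover} encodes.

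Finally, since the cell $F$ has Euclidean diameter at most $\sqrt2$ and, by the normalization of the statement, $\tilde p$ is the basepoint chosen so that $\phi^\theta_t(\tilde p)-\langle f,\sigma\rangle$ is compared with the origin of its cell, the bound $\|\langle f,\gamma^\theta_t(p)\rangle-\phi^\theta_t(\tilde p)\|_2\le\sqrt2$ follows. The second inequality is then an immediate application of the reverse triangle inequality $\bigl|\|u\|_2-\|v\|_2\bigr|\le\|u-v\|_2$. The main obstacle is the bookkeeping in the third paragraph: one has to check that the signed crossings of $\sigma$ with the horizontal and vertical separating curves, weighted by the coefficients appearing in the cycle dual to $f$, assemble exactly into the net lattice displacement of the unfolded trajectory in $\R^2$. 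Everything else is either routine (the diameter estimate) or direct application of earlier lemmas.
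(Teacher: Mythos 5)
Your argument is correct and is essentially the paper's proof, expanded: the paper likewise identifies $\langle f,\gamma^\theta_t(p)\rangle\in\Z^2$ with the label of the translated fundamental cell containing $\phi^\theta_t(\tilde p)$ and bounds the discrepancy by the diameter $\sqrt2$ of that cell, leaving the monodromy bookkeeping to Lemma~\ref{lem:vector_of_the_infinite_cover}. Your additional checks (the closing arc contributes nothing to the intersection number, and the reverse triangle inequality for the second display) are exactly the routine details the paper suppresses.
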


\begin{proof}
The distance between the point $\phi^\theta_t(p) \in \R^2$ and the associated level $\langle f, \gamma^\theta_t(p) \rangle \in \Z^2$ is bounded from above by the diameter of the fundamental domain. The latter is uniformly bounded by $\sqrt{2}$ (with respect to the parameters $a$ and $b$).
\end{proof}

As a consequence of the above proposition we reformulate our main result (Theorem~\ref{thm:divergence_rate_billiard}).
\begin{theorem} \label{thm:divergence_rate_surface}
Let $0 < a < 1$, $0 < b < 1$. For almost every $\theta$ and every $p \in \Xsurf(a,b)$, the approximative geodesic $\gamma^\theta_T(p)$ starting from $p$ in direction $\theta$ satisfies
\[
\limsup_{T \to \infty} \frac{\log \left| \left\langle f, \gamma^\theta_T(p) \right\rangle \right|}{\log T} = \frac{2}{3}.
\]
\end{theorem}

\subsection{Deck group action on $\Xsurf(a,b)$}
\label{subsection:deck_group_action}
We study the covering $\Xsurf(a,b)/\Lsurf(a,b)$ which is normal with Deck group the Klein four group $K = \Z/2 \times \Z/2$ by Lemma~\ref{lemma:X_is_normal_over_L}.

Let $v_{ij}$, $h_{ij}$, $c_{\mathrm{x}j}$ and $c_{i\mathrm{x}}$ for $i,j \in \{0,1\}$ be the generators of $H_1(\Xsurf;\Z)$ defined in Section~\ref{section:Xinfty_as_a_Z2_cover}. We identify them to vectors in the cohomology $H^1(\Xsurf;\Z)$ by duality. The action of the Klein four group $K$ on $\Xsurf(a,b)$ splits the homology in four subspaces. For the generators $\tau_v = (1,0)$ and $\tau_h = (0,1)$ of $K$ we define the subspace $E^{+-}$ to be the set of vectors $v \in H_1(\Xsurf;\Z)$ such that $\tau_v(v) = +1$ and $\tau_v(v) = -1$. We define similarly $E^{++}$, $E^{-+}$ and $E^{--}$.

We denote $h_K = h_{00} + h_{01} + h_{10} + h_{11}$ and $v_K = v_{00} + v_{01} + v_{10} + v_{11}$.
\begin{lemma} \label{lem:homology_splitting}
The action of the deck group of $\Xsurf(a,b) \rightarrow \Lsurf(a,b)$ splits the cohomology into four subspaces
\[
H^1(\Xsurf(a,b);\Q) = E^{++} \oplus E^{+-} \oplus E^{-+} \oplus E^{++},
\]
where each subspace $E^{\kappa}$ is defined over $\Q$ as follows
\begin{itemize}
\item $E^{++} = \Q\ [h_K] \oplus \Q\ [c_{\mathrm{x}0} + c_{\mathrm{x}1}] \oplus \Q\ [v_K] \oplus \Q\ [c_{0\mathrm{x}} + c_{1\mathrm{x}}] \simeq H_1(\Lsurf(a,b);\Q)$
\item $E^{+-} = \Q\ [h_{00} - h_{01} + h_{10} - h_{11}] \oplus \Q\ [v_{00} - v_{01} + v_{10} - v_{11}]$
\item $E^{-+} = \Q\ [h_{00} + h_{01} - h_{10} - h_{11}] \oplus \Q\ [v_{00} + v_{01} - v_{10} - v_{11}]$
\item $E^{--} = \Q\ [h_{00} - h_{01} - h_{10} + h_{11}] \oplus \Q\ [v_{00} - v_{01} - v_{10} + v_{11}]$
\end{itemize}
\end{lemma}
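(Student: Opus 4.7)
The plan is to exploit the representation theory of the Klein four-group $K = \langle \tau_v, \tau_h \rangle \simeq \Z/2 \times \Z/2$ acting on $H_1(\Xsurf(a,b); \Q)$ (and then dualise to cohomology). Since $|K|$ is invertible in $\Q$, the group algebra $\Q[K]$ is semisimple and isomorphic to the product of its four one-dimensional characters $\chi_{\epsilon_1,\epsilon_2}(\tau_v^a \tau_h^b) = \epsilon_1^a \epsilon_2^b$ with $\epsilon_1,\epsilon_2 \in \{+1,-1\}$. The orthogonal idempotents
\[
e_{\epsilon_1 \epsilon_2} = \tfrac{1}{4} (1 + \epsilon_1 \tau_v)(1 + \epsilon_2 \tau_h) \in \Q[K]
\]
therefore decompose any $\Q[K]$-module, and in particular yield $H_1(\Xsurf;\Q) = E^{++} \oplus E^{+-} \oplus E^{-+} \oplus E^{--}$ where $E^{\epsilon_1 \epsilon_2}$ is the image of the corresponding idempotent. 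The lemma is then the assertion that four explicit spanning families lie in the correct summands.

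Next I would compute the action of $\tau_v$ and $\tau_h$ on the spanning family $\{h_{ij}, v_{ij}, c_{\mathrm{x}j}, c_{i\mathrm{x}}\}$ by reading off the conventions established in Section~\ref{section:Xinfty_as_a_Z2_cover}: $\tau_v$ and $\tau_h$ act on the subscripts by addition of $(1,0)$ and $(0,1)$ respectively, with the rule $\mathrm{x}+1 = \mathrm{x}$. Thus $\tau_v$ swaps $h_{0j} \leftrightarrow h_{1j}$ and $v_{0j} \leftrightarrow v_{1j}$, fixes every $c_{\mathrm{x}j}$, and swaps $c_{0\mathrm{x}} \leftrightarrow c_{1\mathrm{x}}$; similarly for $\tau_h$. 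Applying $4e_{\epsilon_1 \epsilon_2}$ to each of $h_{00}$, $v_{00}$, $c_{\mathrm{x}0}$ and $c_{0\mathrm{x}}$ produces, up to signs, exactly the combinations displayed in each $E^{\epsilon_1 \epsilon_2}$. In particular the four characters naturally separate: e.g.\ $h_{00}-h_{01}+h_{10}-h_{11} = 4e_{+-}(h_{00})$ lies in $E^{+-}$, as does $c_{\mathrm{x}0} - c_{\mathrm{x}1} = 4e_{+-}(c_{\mathrm{x}0})$, consistent with relation~(\ref{eq:relation_for_homology_basis}).

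Finally I would check that the listed vectors span each summand by a dimension count and identify $E^{++}$ with $H_1(\Lsurf(a,b);\Q)$. The surface $\Xsurf$ lies in $\HHH(2^4)$ so has genus $5$ and $\dim_\Q H_1(\Xsurf;\Q) = 10$; the twelve generators $\{h_{ij},v_{ij},c_{\mathrm{x}j},c_{i\mathrm{x}}\}$ subject to the two relations~(\ref{eq:relation_for_homology_basis}) form a spanning set of the right size, and the listed families have total dimension $4 + 2 + 2 + 2 = 10$. Since each combination lies in the claimed summand (previous paragraph) and the summands are in direct sum, equality of dimensions forces each family to be a basis of its summand, with no accidental coincidences. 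For the identification $E^{++} \simeq H_1(\Lsurf;\Q)$, I would invoke the standard fact that for a finite unramified cover $\pi : \Xsurf \to \Lsurf$ of group $K$ the transfer/push-forward restricts to a $\Q$-linear isomorphism between the $K$-invariants of $H_1(\Xsurf;\Q)$ (i.e.\ $E^{++}$) and $H_1(\Lsurf;\Q)$; dimension $4 = 2 \cdot \mathrm{genus}(\Lsurf)$ matches. The main subtlety is the bookkeeping in Paragraph 3: one must verify that the relations~(\ref{eq:relation_for_homology_basis}) are the \emph{only} redundancies, equivalently that $c_{\mathrm{x}0}+c_{\mathrm{x}1}$ and $c_{0\mathrm{x}}+c_{1\mathrm{x}}$ are linearly independent of the $h_K, v_K$ inside $E^{++}$, which is where the factor $2$ in $\pi_*[c_{\mathrm{x}0}+c_{\mathrm{x}1}] = 2[\pi_* c_{\mathrm{x}0}]$ becomes relevant and guarantees non-degeneracy of the symplectic form on each summand.
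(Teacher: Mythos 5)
Your argument is correct: the paper states this lemma without proof, and your verification via the orthogonal idempotents of $\Q[K]$, the explicit action of $\tau_v,\tau_h$ on the generating family $\{h_{ij},v_{ij},c_{\mathrm{x}j},c_{i\mathrm{x}}\}$, the dimension count $4+2+2+2=10$, and the transfer isomorphism $E^{++}\simeq H_1(\Lsurf(a,b);\Q)$ is precisely the direct check the authors leave implicit. (One harmless slip: since $\tau_v$ fixes $c_{\mathrm{x}0}$, one has $4e_{+-}(c_{\mathrm{x}0})=2(c_{\mathrm{x}0}-c_{\mathrm{x}1})$ rather than $c_{\mathrm{x}0}-c_{\mathrm{x}1}$, which is immaterial over $\Q$.)
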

We emphasise that the invariant part of $H^1(\Xsurf(a,b);\Z)$ under the subgroup $\langle \tau_v \rangle \subset K$ is identified with $H^1(\Xsurf(a,b)/\langle \tau_v \rangle;\Z)$. This is the main reason for which we consider each quotient of $\Xsurf(a,b)$ by the three subgroups of order two generated by $\tau_v$, $\tau_h$ and $\tau_h\ \tau_v$.

\begin{lemma} \label{lem:cc_of_quotients_of_X}
The surfaces $\Xsurf(a,b) / \langle \tau_v \rangle$ and $\Xsurf(a,b) / \langle \tau_h \rangle$ belongs to the hyperelliptic component $\HHH^{hyp}(2,2)$ while the surface $\Xsurf(a,b) / \langle \tau_v \tau_h \rangle$ belongs to the hyperelliptic locus $\LLL \subset \HHH^{odd}(2,2)$.
\end{lemma}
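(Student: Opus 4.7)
The plan has three steps: identify the stratum as $\HHH(2,2)$; establish hyperellipticity of each quotient by lifting the hyperelliptic involution of $\Lsurf(a,b)$; and determine whether this involution swaps or fixes the two cone points, which pins down the connected component.

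First, since the Deck group $K$ acts freely on $\Xsurf(a,b)$ (the cover is unramified by Lemma~\ref{lem:X_covers_L}), each order-2 subgroup $\langle\tau\rangle$ also acts freely. Riemann-Hurwitz applied to the unramified degree-2 cover $\Xsurf(a,b) \to \Xsurf(a,b)/\langle\tau\rangle$ yields genus 3. The four cone points of $\Xsurf(a,b)$, all preimages of the unique cone point of $\Lsurf(a,b)$, are partitioned into two $\tau$-orbits of size 2, producing two cone points of angle $6\pi$ in the quotient; hence $\Xsurf/\langle\tau\rangle \in \HHH(2,2)$.

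Next, I would establish hyperellipticity of each quotient $Y_\tau := \Xsurf/\langle\tau\rangle$. Since $\iota_L^* = -\Id$ on $H_1(\Lsurf;\Z)$ acts trivially on $H^1(\Lsurf;\Z/2)$, the hyperelliptic involution $\iota_L$ of $\Lsurf(a,b)$ lifts to an automorphism $\tilde\iota: Y_\tau \to Y_\tau$. The square $\tilde\iota^2$ is a deck transformation; since it must fix preimages of the 6 Weierstrass points of $\iota_L$ while the non-trivial deck transformation $\tau$ acts freely, we conclude $\tilde\iota^2 = \Id$. Over each Weierstrass point, exactly one of $\tilde\iota, \tau\tilde\iota$ fixes both preimages while the other swaps them. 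Combining the counting (the two lifts have a total of 12 fixed points in $Y_\tau$) with the Riemann-Hurwitz constraint that an involution on a genus-3 surface has at most $2g+2 = 8$ fixed points, and with an explicit analysis of how many Weierstrass points of $\iota_L$ have preimages fixed by $\tilde\iota$ (using the additional symmetries coming from the wind-tree construction), one shows that exactly one of $\tilde\iota, \tau\tilde\iota$ has 8 fixed points and is thus the hyperelliptic involution of $Y_\tau$.

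Finally, to distinguish the components of $\HHH(2,2)$, I would use the orientation-cover description: $\HHH^{hyp}(2,2)$ is the orientation cover of $\QQQ(4, -1^8)$ on $\P^1$ (hyperelliptic involution exchanges the two zeros of $\omega$), while the hyperelliptic sublocus $\LLL \subset \HHH^{odd}(2,2)$ is the orientation cover of $\QQQ(1, 1, -1^6)$ (hyperelliptic involution fixes each zero). The component to which $Y_\tau$ belongs is therefore determined by whether the preimages in $Y_\tau$ of the singular Weierstrass point $w_0 \in \Lsurf(a,b)$ are swapped or fixed by the hyperelliptic involution. This is the main obstacle: using the labeling of the four copies of the fundamental domain by elements of $K$ (as in Section~\ref{section:Xinfty_as_a_Z2_cover}) and a realization of $\iota_L$ on $\Lsurf(a,b)$ that preserves each of its two horizontal cylinders (generically of distinct circumferences $1-a$ and $a$), one tracks the lift $\tilde\iota$ on the four preimages of $w_0$. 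The outcome is that for $\tau \in \{\tau_v, \tau_h\}$ the preimages of $w_0$ are swapped by the hyperelliptic involution, so $Y_\tau \in \HHH^{hyp}(2,2)$, while for $\tau = \tau_v\tau_h$ they are fixed, so $Y_{\tau_v\tau_h} \in \LLL \subset \HHH^{odd}(2,2)$.
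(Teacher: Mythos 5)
Your framework is sound and in places cleaner than the paper's, but the decisive step is asserted rather than proved. The paper's own argument is to exhibit explicit polygonal models of the quotients and observe that the central symmetry of each polygon is a non-orientable linear involution with quotient a sphere; hyperellipticity and the swap-versus-fix behaviour of the two zeros are then read off directly from the pictures, landing in $\QQQ(4,-1^8)$ for $\tau_v,\tau_h$ and $\QQQ(1^2,-1^6)$ for $\tau_v\tau_h$. Your first two steps are correct and your hyperellipticity argument is a genuinely different (and more structural) route: the lifting criterion via $H^1(\Lsurf;\Z/2)$ is right, $\tilde\iota^2=\Id$ follows as you say, and in fact the ``explicit analysis'' you invoke at the end of that step is unnecessary --- Riemann--Hurwitz forces an involution of a genus $3$ surface to have $0$, $4$ or $8$ fixed points, so the constraint that $\tilde\iota$ and $\tau\tilde\iota$ have $2k$ and $12-2k$ fixed points already pins down $k\in\{2,4\}$ and guarantees that exactly one lift is the hyperelliptic involution. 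That part stands on its own.

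The gap is in the last step. The entire content of the lemma is which of the three quotients has its two zeros exchanged by the hyperelliptic involution and which has them fixed; your proof of this is ``one tracks the lift $\tilde\iota$ on the four preimages of $w_0$ \dots\ The outcome is that \dots'', i.e.\ a restatement of the conclusion. Nothing in your setup distinguishes $\tau_v\tau_h$ from $\tau_v$ or $\tau_h$ --- indeed the counting argument is symmetric in the three index-two subgroups --- so some explicit computation with the gluing pattern of $\Xsurf(a,b)$ (equivalently, the figures the paper draws) is unavoidable here and must actually be carried out. A small further slip: the horizontal cylinders of $\Lsurf(a,b)$ have circumferences $1$ and $1-a$, not $1-a$ and $a$; harmless for your argument, but a sign that the concrete geometry has not yet been checked.
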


\begin{proof}
We see on the two figures below that the central symmetry in each polygonal representation of the surfaces $\Xsurf(a,b) / \langle \tau_v \rangle$ and $\Xsurf(a,b) / \langle \tau_v \tau_h \rangle$ gives rise to an involution that does not preserve the directions: the direction $\theta$ is sent to $-\theta$. The quotient by such involution gives rise to quadratic differentials.
\begin{figure}[!ht]
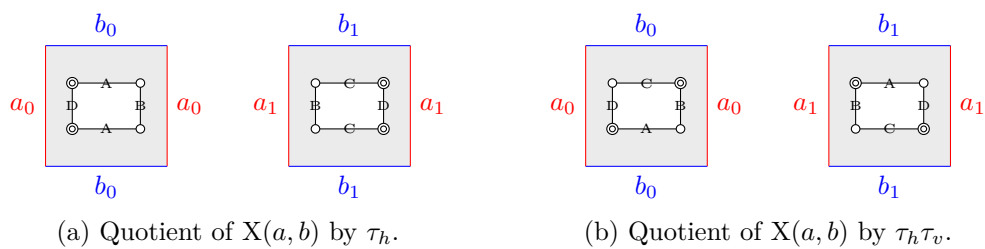

\centering
\subfloat[Quotient of $\Xsurf(a,b)$ by $\tau_h$.]{%
\label{fig:quotient_X_h}%
\picinput{quotient_X_by_h}%
}
\hspace{1cm}
\subfloat[Quotient of $\Xsurf(a,b)$ by $\tau_h \tau_v$.]{%
\label{fig:quotient_X_hv}%
\picinput{quotient_X_by_hv}%
}
\caption{The quotients of degree $2$ of $\Xsurf(a,b)$.}
\label{fig:quotients_of_X}
\end{figure}
In both cases the quotient is a sphere endowed with a quadratic differential and hence the surfaces $\Xsurf(a,b) / \langle \tau_v \rangle$ and $\Xsurf(a,b) / \langle \tau_v \tau_h \rangle$ are hyperelliptic. In the first quotient, the two singularities are exchanged and hence $\Xsurf(a,b) / \langle \tau_v \rangle$ belongs to $\HHH^{hyp}(2,2)$ which corresponds to the orientation cover of quadratic differentials in $\QQQ(4,-1^8)$ (this is the definition of $\HHH^{hyp}(2,2)$, see~\cite{KZ}). While for $\Xsurf(a,b) / \langle \tau_v \tau_h \rangle$ the zeros are fixed by the involution and the surface belongs to the hyperelliptic locus $\LLL \subset \HHH^{odd}(2,2)$ which corresponds to the orientation cover of quadratic differentials in $\QQQ(1^2,-1^6)$.
\end{proof}

\section{Moduli space and Lyapunov exponents} \label{sec:computation_of_Lyapunov_exponents}
In this section, using McMullen classification of $\SL(2,\R)$-invariant closed set and probability measures in $\HHH(2)$, we classify the possible closures $\overline{\SL(2,\R) \cdot \Xsurf(a,b)}$ of $\SL(2,\R)$-orbits of the surfaces $\Xsurf(a,b)$ in $\HHH(2^4)$. Each closure carries a unique $\SL(2,\R)$-invariant ergodic probability measure and we compute the Lyapunov exponents of the Kontsevich-Zorich cocycle with respect to it.

\subsection{Moduli space and $\Xsurf(a,b)$} \label{section:moduli_for_X}
We recall that $\Xsurf(a,b) \in \HHH(2^4)$ is a cover of $\Lsurf(a,b) \in \HHH(2)$ (Lemma~\ref{lem:X_covers_L}). This property is preserved by the action of $\SL(2,\R)$ and more precisely the action of $\SL(2,\R)$ is equivariant: for any $g \in \SL(2,\R)$ the surface $g \cdot \Xsurf(a,b)$ is a cover of $g \cdot \Lsurf(a,b)$. Hence, all $\SL(2,\R)$-orbits of $\Xsurf(a,b)$ belongs to the sublocus of $\HHH(2^4)$ which corresponds to particular covering of surfaces in $\HHH(2)$. This locus, which we denote by $\Xablocus$, is a closed $\SL(2,\R)$-invariant subvariety of $\HHH(2^4)$ which is a finite cover of $\HHH(2)$. In particular, McMullen's classification Theorem for $\SL(2,\R)$-invariant closed subset and probability measures (Theorem~\ref{thm:McMullen_classification}) holds for closures of $\SL(2,\R)$-orbits of $\Xsurf(a,b)$.

The action of the Klein four-group $K$ on surfaces $\Xsurf(a,b)$ and the splitting of Lemma~\ref{lem:homology_splitting} holds for any surface $Y$ in $\Xablocus$. For any $Y \in \Xablocus$ we denote by $\overline{Y} = Y/K$ its quotient in $\HHH(2)$. We have maps $H_1(Y;\R) \rightarrow H_1(\overline{Y};\R)$ (resp. $H^1(\overline{Y};\R) \rightarrow H^1(Y;\R)$) which are equivariant with respect to the Kontsevich-Zorich cocycle. In particular the explicit decomposition in the first part of Lemma~\ref{lem:homology_splitting} remains valid for any surface $Y$ in $\Xablocus$ as it depends only of the action of $K$. In particular, we get an $\SL_2(\R)$-equivariant splitting of the Hodge bundle. But, as $H_1(\Xsurf(a,b);\Z)$ and $H_1(Y;\Z)$ can only be identified locally, the explicit basis of homology we have exhibited for $\Xsurf(a,b)$ has no meaning for $Y$.

\subsection{Computation of Lyapunov exponents} \label{section:computation}
In this section we compute the individual Lyapunov exponents of the KZ cocycle for all $\SL(2,\R)$-invariant ergodic measures on $\Xablocus$. We denote by $E \rightarrow \Xablocus$ the Hodge bundle over $\HHH(2^4)$ restricted to $\Xablocus$.

Recall, that the KZ cocycle is symplectic. Hence, the Lyapunov exponents come by pair of opposites $(\nu, -\nu)$. In the following we call \emph{non negative spectrum} of the KZ cocycle the non-negative numbers $1=\nu_1 > \nu_2 \geq \ldots \geq \nu_g$ such that the multiset $(\nu_1,\nu_2, \ldots ,\nu_g, -\nu_g,\ldots,-\nu_1)$ are the Lyapunov exponents of the KZ cocycle. In our case, for any surface $Y$ in $\Xablocus$ the Oseledets decomposition of $H_1(Y;\R)$ respect the splitting $E = E^{++} \oplus E^{+-} \oplus E^{-+} \oplus E^{--}$. Moreover, the maximal Lyapunov exponent of the KZ cocycle, which equals $1$, belongs to $E^{++}$. Hence the non negative spectrum can be written $\{1,\nu^{++},\nu^{+-},\nu^{-+},\nu^{--}\}$ where $\{1,\nu^{++}\}$ (resp. $\{\nu^{+-}\}$, $\{\nu^{-+}\}$ and $\{\nu^{--}\}$) is the non negative Lyapunov spectrum of the KZ cocycle restricted to $E^{++}$ (resp. $E^{+-}$, $E^{-+}$ and $E^{--}$).

\begin{theorem} \label{thm:value_of_lyap_exp}
For any $\SL(2,\R)$-invariant ergodic probability measure on $\Xablocus$:
\[
\nu^{++} = \nu^{--} = 1/3 \qquad \text{and} \qquad \nu^{+-} = \nu^{-+} = 2/3.
\]
\end{theorem}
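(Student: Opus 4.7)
\textbf{Proof plan for Theorem~\ref{thm:value_of_lyap_exp}.}

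The plan is to reduce each of the four sums of positive Lyapunov exponents on the isotypic subspaces $E^{++},E^{+-},E^{-+},E^{--}$ to a sum of Lyapunov exponents on a hyperelliptic locus, and then apply the Eskin--Kontsevich--Zorich formula (Theorem~\ref{thm:exp_lyap_hyp}). The key geometric input is Lemma~\ref{lem:cc_of_quotients_of_X}, which realises each of the three order-two quotients of $\Xsurf(a,b)$ as an orientation double cover of an explicit stratum of quadratic differentials on the sphere. Because the Klein four-group $K$ acts by translation automorphisms, the splitting $E = E^{++}\oplus E^{+-}\oplus E^{-+}\oplus E^{--}$ is $\SL(2,\R)$-equivariant (and hence preserved by the Kontsevich--Zorich cocycle) over every surface in $\Xablocus$, so the Lyapunov spectrum decomposes accordingly.

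First I would identify, for each subgroup $H\subset K$ of order one or two, which isotypic summands of $H^1(\Xsurf(a,b);\R)$ descend to $H^1(\Xsurf(a,b)/H;\R)$: the $\tau_v$-invariants are $E^{++}\oplus E^{+-}$, the $\tau_h$-invariants are $E^{++}\oplus E^{-+}$, the $\tau_v\tau_h$-invariants are $E^{++}\oplus E^{--}$, and the full $K$-invariants $E^{++}$ are identified with $H^1(\Lsurf(a,b);\R)$. Since each quotient map is a finite translation cover and the Kontsevich--Zorich cocycle is natural under such covers, the (non-negative) KZ spectrum on $H^1$ of the quotient is the spectrum on the corresponding sum of $E^{\epsilon_v\epsilon_h}$'s.

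Next I would apply Theorem~\ref{thm:exp_lyap_hyp} four times. The base $\Lsurf(a,b)\in\HHH(2)$ is the orientation cover of $\QQQ(1,-1^5)$ on $\P^1$ (a standard fact: all of $\HHH(2)$ is hyperelliptic), giving
\[
1+\nu^{++}=\frac{1}{4}\!\left(\frac{1}{3}+5\right)=\frac{4}{3},\qquad \text{so }\ \nu^{++}=\frac{1}{3}.
\]
By Lemma~\ref{lem:cc_of_quotients_of_X}, $\Xsurf(a,b)/\langle\tau_v\rangle$ and $\Xsurf(a,b)/\langle\tau_h\rangle$ lie in $\HHH^{hyp}(2,2)$, the orientation cover of $\QQQ(4,-1^8)$, which yields sum of positive exponents equal to $\frac{1}{4}\cdot 8=2$; hence
\[
1+\nu^{++}+\nu^{+-}=2\quad\text{and}\quad 1+\nu^{++}+\nu^{-+}=2,
\]
so $\nu^{+-}=\nu^{-+}=\tfrac{2}{3}$. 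Finally $\Xsurf(a,b)/\langle\tau_v\tau_h\rangle$ lies in the hyperelliptic locus $\LLL\subset\HHH^{odd}(2,2)$, the orientation cover of $\QQQ(1^2,-1^6)$, giving sum $\frac{1}{4}(2\cdot\tfrac{1}{3}+6)=\tfrac{5}{3}$; hence $1+\nu^{++}+\nu^{--}=\tfrac{5}{3}$, which forces $\nu^{--}=\tfrac{1}{3}$.

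The main point to check carefully is that Theorem~\ref{thm:exp_lyap_hyp} applies to every $\SL(2,\R)$-invariant ergodic probability measure on $\Xablocus$, not just to the natural Lebesgue/Haar measures. This is where the classification of invariant measures is used: by the equivariance of the cover $\Xsurf(a,b)\to\Lsurf(a,b)$ and McMullen's Theorem~\ref{thm:McMullen_classification}, any such measure pushes forward to an $\SL(2,\R)$-invariant probability measure on $\HHH(2)$ which is either Haar on a Teichm\"uller curve or Lebesgue on the whole stratum; both are regular in the sense of \cite{EKZ2}, and the pull-back to each intermediate quotient remains regular and $\SL(2,\R)$-invariant, so Theorem~\ref{thm:exp_lyap_hyp} is applicable in all cases and the four equations above are simultaneously valid. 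The expected difficulty is exactly this verification, together with confirming the stratum identifications $\QQQ(1,-1^5)$, $\QQQ(4,-1^8)$ and $\QQQ(1^2,-1^6)$ for the three hyperelliptic covers; the rest is a $4\times 4$ linear system solved directly.
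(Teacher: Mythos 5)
Your proposal is correct and follows essentially the same route as the paper: the $K$-equivariant splitting of the Hodge bundle over $\Xablocus$, the identification of the three order-two quotients with $\HHH^{hyp}(2,2)$ and $\LLL \subset \HHH^{odd}(2,2)$ via Lemma~\ref{lem:cc_of_quotients_of_X}, and the Eskin--Kontsevich--Zorich formula applied to $\QQQ(4,-1^8)$ and $\QQQ(1^2,-1^6)$, with the base value $\nu^{++}=1/3$ coming from $\HHH(2)$ (the paper cites Bainbridge, you use $\QQQ(1,-1^5)$ --- equivalent). Your assignment of the sum $2$ to $\HHH^{hyp}(2,2)$ and $5/3$ to $\LLL$ is the correct one (the paper's displayed computation transposes these two labels, though its final subtraction uses the right pairing), and your explicit appeal to McMullen's classification to justify applying the EKZ formula to every invariant ergodic measure is exactly the role Theorem~\ref{thm:McMullen_classification} plays in the paper.
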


\begin{proof}
We first consider the case of the rank $4$ subbundle $E^{++}$ which corresponds to invariant vectors under the action of the Klein four group $K$. $E^{++}$ identifies with the pullback of the Hodge bundle over $\HHH(2)$ and in particular, we deduce from results of M.~Bainbridge~\cite{Ba} (see also Theorem~\ref{thm:exp_lyap_hyp}) that $\nu^{++} = 1/3$.

We now consider the case of the rank $2$ subbundles $E^\kappa$ for $\kappa \in \{--,+-,-+\}$. Lemma~\ref{lem:cc_of_quotients_of_X} implies that the subbundle $E^{++} \oplus E^{--}$ (resp. $E^{++} \oplus E^{+-}$ and $E^{++} \oplus E^{-+}$) can be identified to pullback of bundles over different covering loci in $\HHH^{hyp}(2,2)$ and $\LLL \subset H^{odd}(2,2)$. More precisely, the quotient map $Y \mapsto Y / \langle \tau_h \tau_v \rangle$ (resp. $Y \mapsto Y / \langle \tau_v \rangle$ and $Y \mapsto Y / \langle \tau_h \rangle$) induce an isomorphism between $E^{++} \oplus E^{--}$ (resp $E^{++} \oplus E^{+-}$ and $E^{++} \oplus E^{-+}$) and respectively $H_1(Y / \langle \tau_h \tau_v \rangle;\Z)$ (resp. $H_1(Y / \langle \tau_h \rangle;\Z)$ and $H_1(Y / \langle \tau_v \rangle;\Z)$). 

To compute the remaining Lyapunov exponents, we use twice Theorem~\ref{thm:exp_lyap_hyp}. The loci $\HHH^{hyp}(2,2)$ and $\LLL$ comes from orientation coverings of surfaces in the quadratic strata respectively $\QQQ(4,-1^8)$ and $\QQQ(1^2,-1^6)$. For those two components, using Theorem~\ref{thm:exp_lyap_hyp}, we get that the sum of positive Lyapunov exponents are respectively
\begin{align*}
\nu_1 + \nu_2 + \nu_3 &= \frac{2/3+6}{4} = 5/3 & \text{for $\LLL \subset \HHH^{odd}(2,2)$} \\
\nu_1 + \nu_2 + \nu_3 &= \frac{8}{4} = 2 & \text{for $\HHH^{hyp}(2,2)$}.
\end{align*}
By subtracting $4/3=1+1/3$ to each of them that corresponds to the contribution of $E^{++}$ we get that that $\nu^{--} = 1/3$ and $\nu^{+-} = \nu^{-+} = 2/3$.

Notice that the assumption of \emph{regularity} for the measures in Theorem~\ref{thm:exp_lyap_hyp} are automatically satisfied. The regularity property is stable under taking covering locus and is satisfied for Haar measures on Veech surfaces and Lebesgue measure on a stratum (see~\cite{EKZ2}).
\end{proof}

Applying the main result of Chaika-Eskin~\cite{CE} one obtain directly from Theorem~\ref{thm:value_of_lyap_exp} the following.
\begin{corollary}[\cite{CE}] \label{cor:strong_Oseledets}
For all surfaces $Y$ in $\Xablocus$, for almost every $\theta in [0,2\pi)$, the surface $e^{i \theta} Y$ is Oseledets generic: the Lyapunov exponents exist and their values are described by Theorem~\ref{thm:value_of_lyap_exp}.
\end{corollary}

\subsection{A remark on the equality $\nu^{+-} = \nu^{-+}$}
In Theorem~\ref{thm:value_of_lyap_exp}, we have equality of Lyapunov spectrum in $E^{+-}$ and $E^{-+}$. In that section we deduce the equality from the symmetries of the wind-tree model.

An \emph{automorphism} (respectively \emph{anti-automorphism}) of a translation surface $(S,\omega)$ is an affine diffeomorphism $\phi: S \rightarrow S$ such that $\phi^* \omega = \omega$ (resp. $\phi^* \omega = - \omega$). Quotient by automorphisms or anti-automorphisms correspond to cover of translation surfaces (with either an Abelian or quadratic differential). Automorphism and anti-automorphisms commutes with the $\SL(2,\R)$ actions (they are precisely the affine diffeomorphisms for which the derivative belongs to the center of $\SL(2,\R)$ which is $\pm id$) and this was used in the preceding sections to build the decomposition $E = E^{++} \oplus E^{+-} \oplus E^{-+} \oplus E^{--}$ (see Lemma~\ref{lem:homology_splitting}).

An \emph{elliptic symmetry} of a translation surface $(S,\omega)$ is a diffeomorphism $\phi: S \rightarrow S$ such that $\phi^* \omega = e^{i \theta} \omega$ for some $\theta \in [0,2\pi) \backslash \{0,\pi/2,\pi,3\pi/2\}$. Such map $\phi$ necessarily preserves the underlying complex structure of $\omega$ and implies that $\phi$ is of finite order. Identically, a translation surface has an elliptic symmetry if it comes from an orientation cover of higher order differentials. Elliptic symmetries \emph{do not} commute with the $\SL(2,\R)$-action but only with $\SO(2)$.

An \emph{elliptic locus} is an orientation cover of a connected component of differential of order larger than $>2$. Equivalently, it is a subset of a connected component of stratum with an elliptic symmetry which is locally constant.

In $\HHH(2^4)$, we consider the locus of surfaces which are cover of L-shaped surfaces with a rotation symmetry of order $4$. Examples of such surfaces are the surfaces $\Xsurf(a,a)$. The quotient of an element of that locus is a quartic differential on the sphere with angles $\pi$, $\pi$, $-3/4\pi$ and $-1/4\pi$. The real dimension of that locus is $4$ (or $3$ in $\HHH^{(1)}(2^4)$). The following lemma may be used to conclude to equality of Lyapunov exponents in $E^{+-}$ and $E^{-+}$.

\begin{lemma}
Let $X \subset \HHH_g(\alpha)$ be a closed $\SL(2,\R)$-invariant affine locus in $\HHH_g(\alpha)$ of real dimension $2d$ that supports an ergodic measure $\mu$. Let $Y \subset X$ be an elliptic locus of real dimension $d$ and denote by $G$ the associated action on $H^1(S,\R)$ over an element $(S,\omega)$ in $Y$. Let $E_i$ and $E_j$ be two subbundles of the Hodge bundle that are $\SL(2,\R)$ invariant and such that for some $g \in G$ we have $g E_i = E_j$ in the intersection of a neighborhood of $(S,\omega)$ and $Y$. Then the spectrum of the Kontsevich-Zorich cocycle restricted to $E_i$ and $E_j$ coincide.
\end{lemma}

\begin{proof}
We prove that in the intersection of the neighborhood of $(S,\omega)$ with $Y$ there is a generic surface for $\mu$.

Let $\phi$ be an elliptic symmetry of $(S,\omega)$. In other words $\phi^* (\omega) = e^{i \theta} \omega = \cos(\theta) \Re(\omega) - \sin(\theta) \Im(\omega) + i (\sin(\theta) \Re(\Omega) + \cos(\theta) \Im(\omega))$. It is clear that if $\sin(\theta) \not= 0$, then this equation is not preserved along the stable variety of $\omega$ (ie along a deformation of $\Im(\omega)$ only) and along the geodesic flow. Hence, at each point $(S,\omega)$ of $Y$ the subspace $Y$, the stable variety of $(S,\omega)$ and its geodesic are transverse. From the dimension condition, almost all geodesic encounter the stable variety of some element in $Y$. In particular, almost all points in $Y$ (for its $d-1$-dimensional Lebesgue measure) have generic Teichm\"uller geodesics.

To conclude to equality, let us consider a generic surface in $Y$. There exists a generic direction $\theta'$ in $G$ such that $\theta + \theta'$ is also generic ($\theta$ is such that $\phi^* \omega = e^{i \theta} \omega$). Because $\phi^* E_i = \phi^* E_j$ at $(S,\omega)$ we do have equality of exponents in $E_i$ and $E_j$.
\end{proof}

\section{Deviations for translation surfaces} \label{section:deviations}
In this section, we prove Theorem~\ref{thm:deviations} which concerns growth of geodesics.

We recall notation from the introduction. Let $\HHH(\alpha)$ be a stratum of Abelian differential and $\mu$ a $g_t$-invariant ergodic measure on $\HHH(\alpha)$. We denote by $1 = \nu_1 > \nu_2 > \ldots > \nu_k$ the positive Lyapunov exponents of the KZ cocycle and for $X \in \HHH(\alpha)$ which is Oseledets generic
\begin{equation} \label{eq:Oseledets_decomposition}
H^1(X;\R) = F^u_1 \supset F^u_2 \supset \ldots \supset F^u_k \supset F^u_{k+1} = F^c \supset F^s_k \supset \ldots \supset F^s_1 \supset F^s_0 = \{0\}
\end{equation}
the associated Oseledets flag. By Oseledets theorem, the decomposition (\ref{eq:Oseledets_decomposition}) is measurable and is invariant under the Teichm\"uller flow.

We want to prove the following statement: for $\mu$-almost all $X$ which are Oseledets generic, for all $p \in X$ with infinite forward orbit and any norm on $H^1(X;\R)$
\begin{enumerate}
\item for all $f \in F^u_i \backslash F^u_{i+1}$
\[
\limsup_{T \to \infty} \frac{\log | \langle f, \gamma_T(p)) \rangle|}{\log T} = \nu_i,
\]
\item for $f \in F^c$
\[
\limsup_{T \to \infty} \frac{\log |\langle f, \gamma_T(p) \rangle |}{\log T} = 0,
\]
\item there exists a constant $C$, such that for $f \in F^s_i \backslash F^s_{i-1}$
\[
\forall T \geq 0,\ | \langle f, \gamma_T(p) \rangle | \leq C.
\]
\end{enumerate}

We first notice that to prove Theorem~\ref{thm:deviations}, by ergodicity of the Teichm\"uller flow, it is enough to prove it for surfaces $X$ belonging in a small open set of $\HHH(\alpha)$ of positive measure. The strategy is as follows. We build a small open set in which we have uniform estimates for the linear flows. Next, for a surface in this small open set we consider long pieces of trajectory under the linear flow that we decompose using the KZ cocycle. Then, using the uniform estimates, we get the lower and upper bounds.

\subsection{Transversals for the Teichm\"uller flow} \label{subsection:transversal}
In order to code geodesics in an individual surface we use Veech's construction of zippered rectangles~\cite{Ve1}. We recall our convention of markings (see Section~\ref{section:background}) that each translation structure in $\HHH(\alpha)$ carry a choice of an horizontal separatrix at each singularity.

A surface in $\HHH(\alpha)$ is called \emph{regular} if there is no saddle connection in both horizontal and vertical directions. In a regular surface the linear flow in vertical direction is minimal (Keane's Theorem~\cite{Ke}). If there is a connection in vertical (resp. horizontal) direction then the forward (resp. backward) orbit for the Teichm\"uller flow goes to infinity. In particular, using Poincar\'e recurrence theorem, we get that the set of regular surfaces is a set of full measure for $\mu$.

Let $X$ be a regular surface in the support and $\Sigma \subset X$ the finite set of singularities of $X$. Following \cite{Ve1}, we decompose the surface into zippered rectangles. Recall that there is a marked outgoing separatrix in $X$. We consider the initial segment of length $1$ on this separatrix that we identify with $[0,1]$. The Poincar\'e map of the linear flow in this segment is an interval exchange transformation. There exists a canonical segment $I \subset X$ built from Rauzy induction (\cite[Proposition~9.1]{Ve1}, see also \cite{Vi}). The rectangles above each domain of continuity of the interval exchange transformation on $I$ give a decomposition $X = \bigcup R_j$ where $R_j$ are geodesic rectangles with horizontal sides inside $I$ and vertical sides which contain singularities or hit a singularity in the future. The number of rectangles is $d = 2g-2+s-1$ where $g$ is the genus of $X$ and $s$ the number of singularities.

Let $X$ be a regular surface in the support of $\mu$ and $X = \bigcup R_j$ its decomposition into zippered rectangle. The parameters of the zippered rectangles (lengths and heights of the rectangles) give local coordinates for $\HHH(\alpha)$ in a neighbourhood of $X$. In particular, we get zippered rectangles construction for surfaces near $X$ that are \emph{not} regular. Let $U \subset \mathcal{H}(\kappa)$ be an open set which contains $X$ and for which the zippered rectangles obtained from $X$ gives a chart of $\HHH(\alpha)$. In $U$, we have a trivialisation of the Hodge bundle and we identify all fibers with $H^1(X;\R)$.

To each rectangle $R_j$ of a zippered rectangle decomposition of a surface $Y$ in $U$ is associated a curve $\zeta_j \subset Y \backslash \Sigma$ (up to homotopy in $Y \backslash \Sigma$) which corresponds to the Poincar\'e map on the canonic interval of $Y$. The vertical holonomy of $\zeta_j$ is the height of $R_j$. The following is a classical fact.
\begin{lemma}
The set $\{\zeta_j\}_{j=1,\ldots,d}$ forms a basis of $H_1(Y \backslash \Sigma; \Z)$.
\end{lemma}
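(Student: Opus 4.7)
The plan is to exhibit $Y \setminus \Sigma$ as homotopy equivalent to a graph in which the curves $\zeta_j$ naturally give a basis of loops.

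First, I would compute the rank of $H_1(Y \setminus \Sigma; \mathbb{Z})$. The space $Y \setminus \Sigma$ is a connected, non-compact surface without boundary, hence has the homotopy type of a finite graph and $H_1$ is free abelian. Its Euler characteristic is $\chi(Y \setminus \Sigma) = 2 - 2g - s$, so its first Betti number equals $2g + s - 1 = d$. In particular the family $\{\zeta_j\}_{j=1,\ldots,d}$ has exactly the cardinality required to be a basis of $H_1(Y \setminus \Sigma; \mathbb{Z})$, and it therefore suffices to prove that it spans.

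To show spanning I would construct a deformation retraction of $Y \setminus \Sigma$ onto the $1$-complex $\Gamma := I \cup \bigcup_{j=1}^d \zeta_j$. Each rectangle $R_j$ has its core $\zeta_j$ in its interior, horizontal sides lying in $I$, and vertical sides carrying points of $\Sigma$ only at their endpoints. Outside a small neighbourhood of $\Sigma$, the rectangle $R_j$ deformation retracts onto $(\partial R_j \cap I) \cup \zeta_j$ by a horizontal push toward $\zeta_j$. The zippered identifications glue vertical sides in pairs, and the per-rectangle retractions can be chosen to match along these shared edges (for instance, by keeping a thin vertical collar fixed), producing a global deformation retraction of $Y \setminus \Sigma$ onto $\Gamma$. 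Since $I$ is contractible, $\Gamma$ is homotopy equivalent to a wedge of $d$ circles with generators $[\zeta_j]$, and therefore $H_1(Y \setminus \Sigma; \mathbb{Z})$ is free of rank $d$ with basis $\{[\zeta_j]\}$.

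The main obstacle is the geometric bookkeeping required to make the per-rectangle retractions compatible across the shared vertical sides, especially in the vicinity of the singularities. A more algebraic alternative, which bypasses this step, is to construct dual cycles: for each $k$ one can produce an $\eta_k \in H_1(Y \setminus \Sigma; \mathbb{Z})$ represented by a short horizontal segment crossing $\zeta_k$ once transversally, closed up inside $Y \setminus (\Sigma \cup \bigcup_j \zeta_j)$, so that $\langle \zeta_j, \eta_k \rangle = \delta_{jk}$. Combined with the dimension count, this linear independence forces $\{\zeta_j\}$ to be a basis of $H_1(Y \setminus \Sigma; \mathbb{Z})$.
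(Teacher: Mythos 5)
The paper offers no argument for this lemma --- it is recorded as ``a classical fact'' with no proof --- so your proposal has to stand on its own. Its overall strategy (count the rank, then exhibit $\Gamma = I\cup\bigcup_j\zeta_j$ as a spine of $Y\setminus\Sigma$) is the standard one and is correct in outline. The rank count is right: $\chi(Y\setminus\Sigma)=2-2g-s$, so $H_1(Y\setminus\Sigma;\Z)$ is free of rank $2g+s-1$, and this is indeed the number of rectangles. (The paper's displayed formula $d=2g-2+s-1$ is a typo --- it gives $2$ for $\HHH(2)$, whose interval exchanges have $4$ intervals --- so your count $d=2g+s-1$ is the one to trust.)

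The soft spot in your retraction argument is not really the matching along shared vertical sides; it is the role of the singularities, which your construction never engages with. A deformation retraction of $Y\setminus\Sigma$ onto $\Gamma$ is obtained by collapsing each component of $Y\setminus\Gamma$ onto its frontier, and this only works if each such component is an open disk containing \emph{exactly one} point of $\Sigma$ (a disk with two or more punctures does not retract onto its boundary circle). This is true but needs saying: each component is a polygon assembled from half-rectangles glued along vertical sides, hence an open disk, and it contains at least one singularity because the vertical sides terminate at points of $\Sigma$, which lie in $Y\setminus\Gamma$; the Euler characteristic count $2-2g=\chi(\Gamma)+\#\{\text{components}\}=(1-d)+m$ gives $m=s$, so each component contains exactly one singularity. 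With this in place the retraction, and hence the lemma, follows. Your ``algebraic alternative'' has a more serious defect as stated: the intersection form on $H_1(Y\setminus\Sigma;\Z)$ is degenerate (the puncture classes lie in its radical), and a short transversal to $\zeta_k$ generally cannot be closed up inside $Y\setminus(\Sigma\cup\bigcup_j\zeta_j)$, because the two half-rectangles of $R_k$ adjacent to the core of $\zeta_k$ may lie in different components of $Y\setminus\Gamma$, so any closing arc must cross $I$ or another $\zeta_j$. The correct version of this duality argument takes the dual classes $\eta_k$ in $H_1(Y,\Sigma;\Z)$, where Poincar\'e--Lefschetz duality makes the pairing with $H_1(Y\setminus\Sigma;\Z)$ unimodular and suitable relative arcs crossing each $R_k$ once do exist; your concluding ``full-rank direct summand of a free module of the same rank'' step then goes through unchanged.
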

Let $Y \in U$ and $I \subset Y$ be the canonical transversal for the linear flow of $Y$. To a point $p$ in $I$, we associate the sequence of return times $T_n = T_n(p)$ of the linear flow into $I$. Each curve $\gamma_{T_n}(p)$ have both ends in $I$ and we close it using a small piece of the horizontal segment contained in $I$. For any $p \in I$ with infinite orbit and any $n$ we have a unique decomposition as concatenation of curves
\[
\gamma_{T_n}(p) = \zeta_{j_1}(p)\ \zeta_{j_2}(p)\ \zeta_{j_3}(p)\ \ldots\ \zeta_{j_n}(p)
\]
and hence, there exists some non negative numbers $m_{T_n,j}(p)$ for $j=1,\ldots,d$ such that, in homology
\[
\gamma_{T_n}(p) = \sum_{j=1}^d m_{T_n,j}(p) \zeta_j \in H_1(S \backslash \Sigma).
\]
Let $p \in Y$ with infinite backward and forward orbit. There is a unique point $p' \in I$ such that the orbit of $p'$ under the linear flow goes to $p$ before returning in $I$. For $T \geq 0$, we denote by $\gamma_T(p)$ the curve $\gamma_{T_n}(p')$ where $T_{n-1}(p') < T \leq T_n(p')$.

\begin{lemma} \label{lem:transversal}
We can choose $U$ in such way that there exist constants $K_1$ and $K_2$ such that for all $Y \in U$
\begin{enumerate}
\item for $j=1,\ldots,d$, the length $l_j$ and height $h_j$ of the rectangle $R_j = R_j(Y)$ satisfy $K_1^{-1} < h_j < K_1$ and $K_1^{-1} < l_j < K_1$ ,
\item for every point $p \in I$, the decomposition of the geodesic $\gamma_{K_2}(p) = \sum c_j \zeta_j$ in $H_1(Y \backslash \Sigma;\Z)$ where the sum is for $j$ between $1$ and $d$ is such that no $c_j$ is zero. In other words, any geodesic longer than $K_2$ goes through all rectangles $R_j$.
\end{enumerate}
\end{lemma}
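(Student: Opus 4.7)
The plan is to derive both bounds from the fact that the zippered-rectangle data $(\pi, l_1, \ldots, l_d, h_1, \ldots, h_d)$ give a continuous chart on a neighborhood of $X$ in $\HHH(\alpha)$, combined with Keane minimality of the vertical flow on the regular surface $X$. For part (1), the lengths $l_j(Y)$ and heights $h_j(Y)$ are continuous positive functions on this chart at $Y=X$, so shrinking $U$ and choosing $K_1$ large enough yields $K_1^{-1} < l_j(Y), h_j(Y) < K_1$ uniformly on $U$. Shrinking further we may also assume that the combinatorial data (the permutation $\pi$) is constant on $U$, so that all the first-return interval exchange transformations $T_Y : I \to I$ share the same combinatorics and differ only in their length vectors; in particular, for every $Y \in U$ the base interval $I$ is partitioned into $d$ subintervals $I_1(Y), \ldots, I_d(Y)$, each being the base of the rectangle $R_j(Y)$.

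For part (2) I first work at $X$. Since $X$ is regular, the IET $T_X$ satisfies Keane's condition and is minimal, so for every $j$ the sets $T_X^{-k}(I_j(X))$, $k \in \N$, form an open cover of $I$. By compactness, finitely many suffice: there exists $N$ with $I = \bigcup_{k=0}^{N-1} T_X^{-k}(I_j(X))$ for every $j$; equivalently, every $T_X$-orbit of discrete length $N$ visits every $I_j(X)$. After enlarging $N$ slightly, one may also ensure that each point of $I$ lies in the open interior of the union. Because the endpoints of the sets $T_Y^{-k}(I_j(Y))$ depend piecewise linearly on the length vector of the IET with fixed combinatorics, this robust cover persists for every $Y \in U$ after shrinking $U$, so that every $T_Y$-orbit of discrete length $N$ still visits every $I_j(Y)$. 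Translating back to the continuous linear flow, a flow segment in $Y$ of length $T$ returns to $I$ at least $T / \max_j h_j(Y) \geq T/K_1$ times, so the choice $K_2 = N \cdot K_1$ forces at least $N$ discrete returns and therefore at least one visit to each rectangle $R_j(Y)$.

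The main obstacle is the uniformity of $N$ over $U$ in the second part: minimality of an IET is not structurally stable in general, so one cannot propagate minimality itself from $X$ to nearby $Y$. What one needs, and what does propagate, is only the finite open cover of $I$ by preimages up to step $N$; this is cut out by a finite collection of piecewise linear inequalities on the length vector, hence is automatically an open condition in the zippered-rectangle coordinates and survives a small enough shrinking of $U$. A compact alternative, should this perturbation argument prove delicate to write down cleanly, is to apply Rauzy--Veech induction to $X$ for sufficiently many steps until the resulting visit matrix has all entries positive, then observe that combinatorics of the Rauzy--Veech path are locally constant in the zippered-rectangle chart, so the same positivity, and hence the same $N$, holds on a whole neighborhood $U$ of $X$.
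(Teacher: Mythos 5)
Your argument is essentially the paper's: both parts rest on continuity of the zippered-rectangle coordinates for part (1), and on Keane minimality of the vertical flow on the regular surface $X$ plus a compactness argument to get a uniform visiting time, which is then propagated to a neighbourhood $U$ by continuity of the rectangle decomposition. The one point where your write-up is loose --- the sets $T_X^{-k}(I_j)$ are finite unions of half-open intervals, not open sets, so the ``finite open subcover'' step needs care --- is exactly what the paper handles by doubling the orbits of the discontinuities to obtain a Cantor set on which the first-visit-time function is locally constant, hence bounded; your Rauzy--Veech alternative would also close this gap.
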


\begin{proof}
We consider the interval exchange transformation on the segment $I$ associated to surfaces $Y$ in a neighborhoud of $X$. We recall from \cite{Ke}, that by doubling all points of $I$ which are preimages of a discontinuity of the interval exchange gives a Cantor set. The interval exchange transformation has a prolongation to this Cantor set and is a homeomorphism which is semi-conjugated to the initial transformation on $T$.

Around $X$, the lengths $\ell_j$ and heights $h_j$ are continous function of $Y$. Hence, to fulfill the first condition it is enough to consider a relatively compact open set $U$ inside the chart given by the zippered rectangles of $X$.

We prove that it is possible to satisfy the second one. Because of the regularity condition, the linear flow of $X$ is minimal (Keane's Theorem \cite{Ke}). Let $I \subset X$ be the segment associated to $X$. For any $p \in I$ with infinite future orbit, there exists a time $T = T(p)$ such that the curve $\gamma_T(p)$ has visited all rectangles. We choose $T(p)$ to be the first return time of $p$ in $I$ with this property. The map $p \mapsto T(p)$ is locally constant on $I$ and hence also on the associated Cantor set $\widetilde{I}$. By minimality, $T(p)$ is uniformly bounded, otherwise there would exists a point $p$ for which its future does not cross some rectangle. Hence, on $X$, any curve of length longer than $K = \max_{p \in I} T(p) < \infty$ goes through all rectangles. In a small neighborhood of $X$, the rectangles associated to the time $K$ are still rectangles and their heights have been modified continuously with respect to the surface. By choosing $U$ small enough we may ensure that all rectangles of length less than $K$ in $Y$ are still rectangles in $Y' \in U$ and their heights are uniformly bounded by $K_2 = K + \varepsilon$ with $\varepsilon > 0$.
\end{proof}

By taking smaller $U$ if necessary, we assume that it is ``flow box'' that contains $X$. Namely, $U$ is identified with a transversal $P$ to the Teichm\"uller flow containing $X$ times an interval $]-\epsilon; \epsilon[$. For $Y \in P$, we consider the return times $t_n = t_n(Y)$ of the surface $Y$ in $P$. By ergodicity of $\mu$, this is well defined for $\mu$-almost all $Y$ in $P$. The segment $I$ in $g_{t_n} Y$ becomes a segment of length $e^{-t_n} I$ in $Y$. We define the curves $\zeta^{(n)}_j$ as the first return time associated to $e^{-t_n} I$ in $Y$. In homology, we have $\zeta^{(n)}_j = \left(B^{(t_n)}\right)^*(\zeta_j)$.

\subsection{Upper bound}
Let $X$ be a regular surface and $P$ a transversal to the Teichm\"uller flow containing $X$ as in section~\ref{subsection:transversal}. In particular we assume that lemma~\ref{lem:transversal} holds for all surfaces $Y$ in $P$.

Let $Y \in P$ a surface which is recurrent for the Teichm\"uller flow and let $\zeta^{(n)}_j$, $j=1,\ldots,d$, $n=0,1,\ldots$ be the curves defined at the end of the previous section as first return times into $I^{(n)} = e^{-t_n} I \subset Y$. By linear algebra, we know that the parings $\left\langle f, \zeta_j^{(n)} \right\rangle = \left\langle B^{(t_n)} f, \zeta_j \right\rangle$ is bounded by $\|B^{(t_n)} f\|$ times a constant. To prove that the bound still holds for a geodesic not necessarily of the form $\zeta^{(n)}_j$ we use the following lemma.
\begin{lemma}[\cite{Fo} Lemma~9.4, \cite{Zo1} Proposition~8] \label{lem:cut_into_pieces}
Let $X$ be a regular surface and $P$ a transversal containing $X$ as above. Let $Y \in P$ be recurrent for the Teichm\"uller flow and $\zeta^{(n)}_j$ be the curves defined by first return times. Let $p \in I \subset Y$ be a point with infinite future orbit. For each $T \geq 0$ there exists an integer $n=n(T)$ and a decomposition
\[
\gamma_T(p) = \sum_{k = 0}^n \sum_{j = 1}^d m^{(k)}_j \zeta_j^{(k)} \qquad \text{in $H_1(Y;\Z)$,}
\]
which satisfies
\begin{enumerate}
\item the $m^{(k)}_j$ are non negative integers for $k=1,\ldots,n$ and $j=1,\ldots,d$,
\item $\sum_j m^{(n)}_j \not= 0$ and $K_1^{-1}\, e^{t_n} < T$ \label{DK:estim_on_n},
\item $K_1^{-1}\, e^{t_k} \leq \ell \left(\zeta^{(k)}_j \right) \leq K_1\, e^{t_k}$ for $j=1,\ldots,d$, where $\ell$ denotes the length,
\item $\sum_j m^{(k)}_j \leq 2 (K_1)^2 e^{t_{k+1}-t_k}$.
\end{enumerate}
\end{lemma}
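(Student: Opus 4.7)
The plan is to perform a Zorich–Ostrowski style expansion of the orbit segment $\gamma_T(p)$ with respect to the nested family of induced transversals $I = I^{(0)} \supset I^{(1)} \supset \cdots \supset I^{(n)}$ associated with the backward return times $0 = t_0 < t_1 < \cdots < t_n$ of $Y$ in $P$. Geometrically, $I^{(k)}$ is the short segment on $Y$, of horizontal length $e^{-t_k}$ times that of $I$, that plays on $Y$ the role the canonical transversal of $g_{t_k} Y$ plays on $g_{t_k} Y$; the cycle $\zeta_j^{(k)}$ is realized as the first-return loop of the linear flow on $Y$ to $I^{(k)}$ coming from the $j$-th sub-rectangle of the zippered rectangle decomposition of $g_{t_k} Y$. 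Applying Lemma~\ref{lem:transversal} to the surface $g_{t_k} Y \in P$ immediately yields the length bound $K_1^{-1} e^{t_k} \leq \ell(\zeta_j^{(k)}) \leq K_1 e^{t_k}$ of item~3, because vertical lengths on $Y$ are vertical lengths on $g_{t_k} Y$ dilated by~$e^{t_k}$.

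I would then choose $n = n(T)$ to be the largest integer for which the forward orbit $\{\phi_s(p) : 0 \leq s \leq T\}$ contains at least one complete first-return loop to $I^{(n)}$. The length estimate above forces $K_1^{-1} e^{t_n} < T$, which is item~2 and also guarantees $\sum_j m_j^{(n)} \neq 0$. The decomposition itself is built inductively from top to bottom: after replacing $p$ by its first return to $I$ at a cost bounded by a single rectangle diameter, the orbit enters $I^{(n)}$ at some first time $s_0$, performs $M_n = \sum_j m_j^{(n)}$ successive first-return loops to $I^{(n)}$, and then leaves $I^{(n)}$ without coming back until time $T$; the middle block reads in homology exactly $\sum_{j=1}^d m_j^{(n)} \zeta_j^{(n)}$ with non-negative integer multiplicities. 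The initial segment $[0, s_0]$ and final segment $[s_1, T]$ each have length at most $K_1 e^{t_n}$ by the maximality of $n$, and the Rauzy induction step from $I^{(k-1)}$ to $I^{(k)}$ expresses any piece that stays above $I^{(k)}$ as a concatenation of first returns to $I^{(k-1)}$ plus a strictly shorter residual; applying this recursively for $k = n, n-1, \ldots, 1$ peels off the level-by-level contributions $\sum_j m_j^{(k)} \zeta_j^{(k)}$ until only a residual of length at most $K_1$ remains, which is absorbed by the short horizontal arc used to close $\gamma_T(p)$.

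For item~4, the level-$k$ pieces contributing to $\sum_j m_j^{(k)} \zeta_j^{(k)}$ all fit inside a single orbit segment of length at most the diameter of a level-$(k+1)$ loop (or $T$ itself for $k = n$), itself bounded by $2 K_1 e^{t_{k+1}}$; dividing by the minimum level-$k$ loop length $K_1^{-1} e^{t_k}$ gives $\sum_j m_j^{(k)} \leq 2 K_1^2 \exp(t_{k+1} - t_k)$, as claimed. The hard part will be the careful bookkeeping showing that the iterative peeling actually closes up in $H_1(Y \setminus \Sigma; \Z)$: one must verify that at each level the residual is honestly a \emph{concatenation} of first returns to the next finer transversal plus a strictly shorter residual, and that no spurious boundary cycles get introduced by the small horizontal closing arcs used to turn the open orbit pieces $\zeta_j^{(k)}$ and $\gamma_T(p)$ into true elements of $H_1(Y \setminus \Sigma; \Z)$. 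Here the Keane minimality of the linear flow on each regular surface $g_{t_k} Y$, guaranteed by the regularity of $Y$, is what makes the induction terminate and makes the identification of successive residuals with first returns unambiguous.
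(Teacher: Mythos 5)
Your proposal is correct and follows essentially the same route as the paper: the prefix--suffix (Zorich--Ostrowski) decomposition of $\gamma_T(p)$ with respect to the nested transversals $I^{(0)} \supset \cdots \supset I^{(n)}$, with $n$ chosen maximal so that a full first-return loop to $I^{(n)}$ occurs, and the counts $m_j^{(k)}$ bounded by comparing the length $K_1 e^{t_{k+1}}$ of the level-$k$ prefix and suffix residuals to the minimal loop length $K_1^{-1} e^{t_k}$. The homological bookkeeping you flag as the ``hard part'' is handled in the paper exactly as you describe, by closing all pieces with short horizontal arcs inside $I$.
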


\begin{proof}
Let $I=I^{(0)} \subset Y$ be the segment of the interval exchange transformation associated to the zippered rectangles decomposition of $Y$. For $k \geq 1$, let $I^{(k)}$ be the subintervals of $Y$ which are the image of $I(g_{t_k}Y) \subset g_{t_k}(Y)$ under $g_{-t_k}$. We recall that this segment is the segment with the same left extremity as $I$ but whose length is $e^{-t_k}$ times the one of $I$.

We describe the so called \emph{prefix-suffix} decomposition in symbolic dynamics for $\gamma_T(p)$. We assume that $T$ is a return time of the linear flow in $I$ and note $\gamma = \gamma_T(p)$. Let $n$ be the largest $k$ such that the closed curve $\gamma$ crosses twice $I^{(k)}$. We may decompose $\gamma = r_-^{(n)}\ \gamma^{(n)}\ r_+^{(n)}$ where
\begin{itemize}
\item $r_-^{(n)}$ starts from $p$ and ends in $I^{(n)}$,
\item $\gamma^{(n)}$ is a non empty concatenation of $\gamma^{(n)}_j$,
\item $r_+^{(n)}$ starts from $I^{(n)}$ and ends at $p$.
\end{itemize}
We choose $r_-^{(n)}$ and $r_+^{(n)}$ to be minimal and hence their length are smaller than $K_1 e^{t_n}$ by Lemma~\ref{lem:transversal}, property 1. On the other hand, by definition, each curve $\gamma^{(n)}_j$ is of length larger than $K_1^{-1} e^{t_n}$ and hence $T > K_1^{-1} e^{t_n}$. This proves equation 2 and also 1 and 3 for the special case $k=n$.

We now proceed by induction and decompose $r_-^{(n)}$ and $r_+^{(n)}$ with respect to the other recurrence times $0 < t_k < t_n$. We assume that we built two curves $r_-^{(k)}$ and $r^{(k)}_+$ and two sequences $\gamma_-^{(k)},\gamma_-^{(k+1)},\ldots,\gamma_-^{(n-1)}$ and $\gamma_+^{(k)}, \gamma_+^{(k+1)}, \ldots, \gamma_+^{(n-1)}$ such that
\[
\gamma = r_-^{(k)} \ \left(\gamma_-^{(k)} \gamma_-^{(k+1)} \ldots \gamma_-^{(n-1)}\right)\  \gamma^{(n)}\ \left(\gamma_+^{(n-1)} \ldots \gamma_+^{(k+1)} \gamma_+^{(k)}\right)\ r_+^{(k)}
\]
with
\begin{itemize}
\item $\gamma_-^{(m)}$ starts from $I^{(m)}$, ends in $I^{(m+1)}$ and does not cross $I^{(m+1)}$ before its endpoint,
\item $\gamma_+^{(m)}$ starts from $I^{(m+1)}$, ends in $I^{(m)}$ and does not cross $I^{(m+1)}$ after its startpoint,
\item $r_-^{(k)}$ starts from $I^{(0)}$ and ends in $I^{(k)}$ and does not cross $I^{(k)}$ before its endpoint,
\item $r_+^{(k)}$ starts from $I^{(k)}$ and ends in $I^{(0)}$ and does not cross $I^{(k)}$ after its startpoint.
\end{itemize}
Repeating the same procedure, we end with the decomposition
\[
\gamma = \gamma_-^{(0)}\ \gamma_-^{(1)}\ \ldots\ \gamma_-^{(n-1)}\  \gamma^{(n)}\ \gamma_+^{(n-1)}\ \ldots\ \gamma_+^{(1)}\ \gamma_+^{(0)}.
\]
From the construction, we know that both $\gamma_-^{(k)}$ and $\gamma_+^{(k)}$ may be written as concatenation of curves $\zeta^{(k)}_j$ with positive coefficients. Let $\gamma_-^{(k)} + \gamma_+^{(k)}= \sum m^{(k)}_j \zeta_j^{(k)}$ in $H_1(Y;\Z)$. The property 1 is clearly satisfied. By maximality $\gamma_-^{(k)}$ satisfy $\ell\left(\gamma_-^{(k)} \right) < K_1 e^{t_{k+1}}$ and the same is true for $\gamma_+^{(k)}$. On the other hand each $\zeta^{(k)}_j$ is of length at least $K_1^{-1} e^{t_k}$ by property 1 of Lemma~\ref{lem:transversal} and hence property 3 is satisfied. From the latter inequalities, we obtain
\[
K_1^{-1} e^{t_k} \sum m^{(k)}_j < \sum m^{(k)}_j \ell(\zeta_j^{(k)}) < 2 K_1 e^{t_{k+1}}.
\]
which implies that $\sum m^{(k)}_j < 2 (K_1)^2 e^{t_{k+1}-t_k}$ which is property 4.
\end{proof}

Now, we prove the upper bound in Theorem~\ref{thm:deviations}. We restrict to the case 1 relative to one of the unstable subspace $F^u_i$ of the Oseledets flag. The same proof works for the other cases. We follow mainly Section~9 of \cite{Fo} (see also Section~6 of \cite{Zo1} and Section~4.9 of \cite{Zo2}). In what follows $K_i$ for $i=3,4,\ldots$ denote constants which do not depend on the time $T$ or the number $n = n(T)$.

We fix $Y \in P$ which is generically recurrent and Oseledets generic. We also fix $p$ a point in $Y$ with infinite forward orbit for the linear flow. By Lemma~\ref{lem:cut_into_pieces} and Oseledets' theorem (see section~\ref{section:introduction}), for any $\epsilon > 0$, there exists a constant $K_3$ and such that for $T$ big enough the following estimation holds
\begin{equation} \label{eq:estim1}
\left|\left\langle f, \gamma_T(p) \right\rangle\right| \leq K_3 \sum_{k=1}^n e^{t_{k+1}-t_k} \ e^{(\nu_i+\varepsilon) t_k}.
\end{equation}
Moreover, if $Y$ is generically recurrent for the Teichm\"uller flow, one can ensure that
\[
\lim_{k \to \infty} \frac{t_k}{k} = M
\]
where $M$ is the inverse of the transverse $\mu$-measure of $P$. As $Y$ is generically recurrent, for any $\delta$ the following estimation holds for $k$ big enough
\begin{equation} \label{eq:approx_mult_return_time}
\left(M-\delta \right) k \leq t_k \leq \left(M+ \delta \right) k
\end{equation}
Using~(\ref{eq:estim1}) and~(\ref{eq:approx_mult_return_time}) we get that for $T$ big enough we get
\begin{align*}
\left| \left\langle f, \gamma_T(p) \right \rangle \right|
 & \leq K_4 \sum_{k=1}^n \exp \left( (M+\delta)(k+1) - (M-\delta)k\right) \ 
                      \exp\left (\nu_i + \epsilon) (M+\delta) k \right) \\
 & \leq K_5 \sum_{k=1}^n \exp \left((\nu_i+\epsilon)(M+\delta)+2\delta)k \right) \\
 & \leq K_6 \exp (((\nu_i+\epsilon)(M+\delta)+2\delta)n).
\end{align*}
Now, by equation~(\ref{eq:approx_mult_return_time}) and the choice of $n=n(T)$ in the estimate \ref{DK:estim_on_n} in Lemma~\ref{lem:cut_into_pieces} we have for $T$ big enough
\[
\exp \left((M - \delta ) n \right) \leq \exp(t_n) \leq K_1\ T
\]
Hence we get that for $T$ big enough
\[
\frac{\log |\langle f, \gamma_T(p)|}{\log T} \leq \frac{(\nu_i+\epsilon)(M + \delta) + 2\delta}{M - \delta}.
\]
As $\delta$ and $\epsilon$ can be chosen arbitrarily small we get the upper bound.

\subsection{Lower bound}
We now prove the lower bound in Theorem~\ref{thm:deviations}. The only non trivial case is the one of a cocycle in unstable part $F^u_i$ or the central part $F^c$ of the Oseledets flag (\ref{eq:Oseledets_decomposition}). The proof is identical for both of them.

Let $X$ be a regular surface and $P$ be a transversal containing $X$ as in Section~\ref{subsection:transversal}. Let $Y \in P$ be recurrent for the Teichm\"uller flow and Oseledets generic. We denote by $t_n$ the sequence of return times in $P$. We fix a point $p \in Y$ with infinite future orbit and a cocycle $f \in F^u_i \backslash F^u_{i+1}$ in the unstable part of $H^1(X;\Z)$.

We use a similar decomposition as in the proof of Lemma~\ref{lem:cut_into_pieces}. For all $n$, we consider the sequence of return times of $p$ into $I^{(n)}$. For the $m$-th return time $T$ in $I^{(n)}$ we have a decomposition
\[
\gamma_T(p) = \gamma_-^{(1)}\ \gamma_-^{(2)}\ \ldots\ \gamma_-^{(n-1)}\ \zeta_{j_1}^{(n)} \zeta_{j_2}^{(n)} \ldots \zeta_{j_m}^{(n)}.
\]
where the sequence $(j_k) = (j_k(n))$ does only depend on $p$ and $n$ and the sequence $\gamma_-^{(k)}$ only on $p$. The length of the initial segment $\gamma_-^{(1)}\ \gamma_-^{(2)} \ldots\ \gamma_-^{(n-1)}$ corresponds to the first hitting time $T_n^-$ of $I^{(n)}$ starting from $p$.

Let $\varepsilon > 0$, we want to prove that the following holds
\[
\limsup_{T \to \infty} \frac{\log \left| \left\langle f, \gamma_T(p) \right\rangle \right| }{\log T} \geq \nu_i - \varepsilon.
\]
We are done if the above equality holds for infinitely many prefixes $\gamma_-^{(1)}\ \ldots\ \gamma_-^{(n-1)}$ of $\gamma_T(p)$. We assume that the latter condition does not hold and prove that the above inequality still holds. From that hypothesis, we get
\begin{equation} \label{eq:zorich_assumption_trick}
\log \left| \left\langle f, \gamma_{T_n^-}(p) \right\rangle \right| =
\log \left| \left\langle f, \gamma_-^{(1)} \gamma_-^{(2)} \ldots \gamma_-^{(n-1)} \right\rangle \right| 
\leq (\nu_i - \epsilon/2) \log T_n^-.
\end{equation}

\begin{lemma} \label{lem:goods_and_bads}
Let $K_1$ and $K_2$ be the constant of Lemma~\ref{lem:transversal}. There exists an index $\ell \in \{1,\ldots, \lfloor K_1 K_2 \rfloor \}$, a constant $C > 0$ and an infinite subset $N \subset \N$ such that for all $n \in N$
\[
\forall \ell' \in \{1,\ldots,\ell-1\},\ \frac{\left|\left\langle B^{(t_n)} f, \zeta_{j_{\ell'}(n)} \right\rangle \right|}{\left\| B^{(t_n)} f \right\|} \leq \frac{C}{K_1 K_2} \quad \text{and} \quad \frac{\left|\left\langle B^{(t_n)} f, \zeta_{j_\ell(n)} \right\rangle \right|}{\left\| B^{(t_n)} f \right\|} \geq C.
\]
\end{lemma}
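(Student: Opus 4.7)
The plan is to combine three ingredients: a uniform basis bound, the fact that every rectangle is visited within the first $K_1 K_2$ returns, and a pigeonhole over finitely many outcomes. Throughout, set $\alpha_\ell(n) := |\langle B^{(t_n)} f, \zeta_{j_\ell(n)} \rangle|/\|B^{(t_n)} f\|$.

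First, since the transversal $P$ is relatively compact in the zippered-rectangles chart (by construction in Lemma~\ref{lem:transversal}) and $\{\zeta_j(Y)\}_{j=1,\ldots,d}$ depends continuously on $Y \in P$ while forming a basis of $H_1(Y \setminus \Sigma;\Z)$, I would start by producing a constant $C_0 > 0$ such that $\max_j |\langle g, \zeta_j(Y) \rangle| \geq C_0 \|g\|$ for every $Y \in P$ and every $g \in H^1(Y;\R)$. Applying this at a recurrent time $t_n$ to $g = B^{(t_n)} f$ on $g_{t_n} Y \in P$ yields a rectangle index $j^*(n)$ with normalized intersection at least $C_0$.

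Second, I would invoke Lemma~\ref{lem:transversal} on the renormalized surface $g_{t_n} Y$: any geodesic of vertical length $\geq K_2$ passes through every rectangle, while each return to the canonical interval consumes vertical time at most $K_1$; hence after $K_1 K_2$ returns one has accumulated vertical time at least $K_2$ and therefore visited every rectangle index. Thus $j^*(n) = j_{\ell^*}(n)$ for some $\ell^* \leq K_1 K_2$, giving $\alpha_{\ell^*}(n) \geq C_0$.

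Third, to enforce the gap between earlier small values and the $\ell$-th large one, I would iteratively lower the threshold: starting with $C := C_0$, let $\ell(n,C)$ be the smallest index in $\{1,\ldots,K_1 K_2\}$ with $\alpha_{\ell(n,C)}(n) \geq C$; if some $\ell' < \ell(n,C)$ satisfies $\alpha_{\ell'}(n) > C/(K_1 K_2)$, replace $C$ by $C/(K_1 K_2)$ and repeat. Each step strictly decreases the candidate index, so the loop terminates in at most $K_1 K_2$ iterations, producing $\ell(n) \in \{1,\ldots,K_1 K_2\}$ and a threshold $C(n) \in \{C_0 (K_1 K_2)^{-k} : 0 \leq k \leq K_1 K_2\}$ that meet both required inequalities. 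Since this pair $(\ell(n), C(n))$ ranges over a finite set, an ordinary pigeonhole extracts an infinite subset $N \subset \N$ on which $\ell(n) = \ell$ and $C(n) = C$ are constant, which is exactly the data sought.

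The main obstacle I anticipate is the uniform basis bound in the first step: one must control the change-of-basis matrices between $\{\zeta_j(Y)\}$ and a continuously trivialized basis of $H_1(Y \setminus \Sigma;\Z)$ uniformly over $Y \in P$. The relative compactness of $P$ built into Lemma~\ref{lem:transversal} makes this a continuity statement, after which the remaining geometric input (all rectangles visited in time $K_1 K_2$) and the iterative-threshold/pigeonhole argument are bookkeeping.
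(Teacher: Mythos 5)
Your proposal is correct. The two substantive ingredients are exactly those of the paper's proof: the uniform constant $C_0$ with $\max_j |\langle g,\zeta_j\rangle| \geq C_0\|g\|$ (the paper gets it from compactness of the unit sphere for the fixed basis $\{\zeta_j\}$; your appeal to relative compactness of $P$ is the same point), and the fact that within the first $K_1K_2$ returns every rectangle is visited, so that some position $\ell^* \leq K_1K_2$ carries intersection at least $C_0$. Where you diverge is the final extraction. The paper runs a dichotomy on $\limsup_{n}\alpha_{\ell'}(n)$ position by position: it takes the first position $\ell$ whose $\limsup$ is positive (the process must stop by the visit property), sets $C$ to half that $\limsup$, takes $N$ a subsequence realizing it, and truncates $N$ so that the earlier positions, whose normalized intersections tend to $0$, fall below $C/(K_1K_2)$. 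Your version instead runs, for each fixed $n$, a terminating threshold-lowering loop producing a pair $(\ell(n),C(n))$ in a finite set, then pigeonholes. Both are valid; yours has the mild advantage of being fully deterministic per $n$ with quantized constants, while the paper's is shorter to state. One small slip in your step two: you write that each return to the canonical interval consumes vertical time \emph{at most} $K_1$ and conclude that $K_1K_2$ returns accumulate time at least $K_2$; an upper bound on each return cannot give a lower bound on the total. The correct input from Lemma~\ref{lem:transversal} is that each return time is at least $K_1^{-1}$ (the heights satisfy $h_j > K_1^{-1}$), whence $K_1K_2$ returns accumulate time at least $K_1^{-1}\cdot K_1K_2 = K_2$. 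With that inequality reversed, the argument goes through as you intend.
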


\begin{proof}
From equivalence of norms on the finite dimensional vector space $H^1(X;\R)$, there exists a constant $C' > 0$ such that
\[
\forall v \in H^1(X;\Z), \quad \max_{j=1,\ldots,d}{|\langle v, \zeta_j\rangle|} \geq C' \|v\|.
\]

Now, since the length of each curve $\zeta_j$ is at least $K_1^{-1}$ and that after time $K_2$ all curves $\zeta_j$ appear (see Lemma~\ref{lem:transversal}), we know that before $\lfloor K_1 K_2 \rfloor $ return times in $I$ any geodesic $\gamma_T(p)$ passes through all rectangles. In particular for at least one of the curves $\zeta_{j_1(n)}$, \ldots $\zeta_{j_{K_1 K_2}(n)}$ we have a uniform lower bound on the pairing with $f$.

Now, consider the sequence of pieces in first position $\zeta_{j_1(n)}$ as $n \in \N$. If 
\[
\limsup_{n \to \infty} \frac{\left|\left\langle B^{(t_n)} f, \zeta_{j_1(n)} \right\rangle \right|}{\left\| B^{(t_n)} f \right\|} > 0
\]
then we are done by choosing $C$ to be the half of the $\limsup$ above. If not, we consider the sequence of pieces in second position $\zeta_{j_2(n)}$ and repeat the dichotomy. We know from the first part of the proof, that this process stops before the $(K_1 K_2)$-th position. We get a position $\ell$, a constant $C$, and a subsequence $N \subset \N$ that satisfy the right inequality of the statement of the lemma. By starting the subsequence far enough (i.e. considering $N \cap \{m,m+1,\ldots\}$ for $m$ big enough), by our construction since each of the $\liminf$ for $l'=1,\ldots,k-1$ is $0$, we may ensure by our construction that the $\ell-1$ inequalities on the left holds.
\end{proof}

Let $\ell$, $C$ and $N \subset \N$ that satisfies the conclusion of Lemma~\ref{lem:goods_and_bads}. Let $n \in N$ and $p_n \in I^{(n)}$ be the endpoint of the prefix $\gamma_-^{(1)}\ \ldots\ \gamma_-^{(n-1)}$. Then
\begin{align*}
\left\langle f, \gamma_{T_n}(p) \right\rangle
 &= \left\langle f, \gamma_{T^-_n}(p) \right\rangle + \left\langle f, \gamma_{T_n - T_n^-}(p_n) \right\rangle\\
 &= \left\langle f, \gamma_-^{(1)}\ \ldots\ \gamma_-^{(n-1)}\right\rangle 
 + \left\langle f, \zeta^{(n)}_{j_1(n)} \right\rangle + \ldots +  \left\langle f, \zeta^{(n)}_{j_{l-1}(n)} \right\rangle + \left\langle f, \zeta^{(n)}_{j_l(n)} \right\rangle \\
 &= \left\langle f, \gamma_-^{(1)}\ \ldots\ \gamma_-^{(n-1)}\right\rangle
 + \left\langle B^{(t_n)} f, \zeta_{j_1(n)} \right\rangle + \ldots + \left\langle B^{(t_n)} f, \zeta_{j_{l-1}(n)} \right\rangle + \left\langle B^{(t_n)} f, \zeta_{j_l(n)} \right\rangle.
\end{align*}
Using triangular inequality, we get
\begin{align*}
\left| \left\langle f, \gamma_{T_n}(p) \right\rangle \right| &
 \geq \left| \left\langle B^{(t_n)} f, \zeta_{j_l(n)} \right\rangle\right| - \sum_{k=1}^{l-1} \left| \left\langle B^{(t_n)} f, \zeta_{j_k(n)} \right\rangle \right| - \left| \left\langle f, \gamma_{T^-_n}(p) \right\rangle \right| \\
 & \geq C \left\| B^{(t_n)} f \right\| - \frac{(l-1) C}{K_1 K_2} \left\| B^{(t_n)} f \right\| - \left| \left\langle f, \gamma_{T_n^-}(p) \right\rangle \right| \\
 & \geq \frac{C}{K_1 K_2} \left\| B^{(t_n)} f \right\| - \left| \left\langle f, \gamma_{T^-_n}(p) \right\rangle \right|.
\end{align*}
We use twice Lemma~\ref{lem:transversal} to prove that $T_n$ grows like $e^{t_n}$. First of all, as $T_n$ is a time for which the orbit of $p$ under the linear flow has reached at least twice the interval $I^{(n)}$ we have $T_n > K_1^{-1} e^{t_n}$. Moreover, by construction, $T_n < K_1 (l+1) e^{t_n} <= K_1 (K_1 K_2 + 1) e^{t_n}$. Hence
\begin{equation} \label{eq:comp_tn_Tn}
\lim_{n \to \infty} \frac{\log T_n}{t_n} = 1.
\end{equation}
From our assumption (\ref{eq:zorich_assumption_trick}), for $n \in N$ big enough, the term $| \langle f, \gamma_{T_n^-} \rangle |$ is exponentially smaller than $\| B^{(t_n)} f\|$. We hence get that
\[
\limsup_{T \to \infty} \frac{|\langle f, \gamma_T(p) \rangle |}{\log T} \geq \limsup_{n \in N} \frac{\log \left(C/(K_1 K_2)\left\| B^{(t_n)} f \right\| - \left| \left\langle f, \gamma_{T_n^-}(p) \right\rangle \right| \right)}{t_n} = \limsup_{n \in N} \frac{\log \left\| B^{(t_n)} f \right\|}{\log t_n} = \nu_i.
\]
Where the last inequality holds, as we assume $Y$ to be Osseledets generic. In other words, under assumption~(\ref{eq:zorich_assumption_trick}) we exhibit a subsequence on which the $\limsup$ is achieved.

\subsection{Theorem~\ref{thm:divergence_rate_surface} for the whole locus $\Xablocus$}
We now prove our main theorem about the wind-tree model in the following form
\begin{lemma} \label{lem:deviations_for_almost_all_surfaces}
Let $0 < a < 1$ and $0 < b < 1$. Let $f \in H^1(\Xsurf(a,b);\Z^2)$ be the cocycle that defines the wind-tree model. For $Y \in U$, let $Y_\theta = e^{-i \theta} Y$. Then for all $Y \in U$, Lebesgue almost every $\theta \in S^1$, every point $p \in Y_\theta$ with infinite forward orbit for the linear flow
\[
\limsup_{T \to \infty} \frac{\log \left|\left\langle f, \gamma_T(p) \right\rangle \right|}{\log(T)} = \frac{2}{3}.
\]
\end{lemma}

\begin{proof}
By Chaika Eskin result~\cite{CE}, generic recurrence and Oseledets holds for almost every $\theta$. We only need to analyze in which components of the Oseledets splitting $f$ belongs to.

From Lemma~\ref{lem:vector_of_the_infinite_cover} and Lemma~\ref{lem:homology_splitting}, we know that the cocycle $f \in H^1(\Xsurf(a,b);\Z^2)$ decomposes into two pieces $f^{+-} \in E^{+-}(\Q)$ and $f^{-+} \in E^{-+}(\Q)$ where each of $E^{+-}(\R)$ and $E^{-+}(\R)$ are rank $2$ subbundles stable under the Kontsevich-Zorich cocycle. From Theorem~\ref{thm:value_of_lyap_exp} and Corollary~\ref{cor:strong_Oseledets}, the Lyapunov exponents of the KZ cocycle for $e^{i \theta} Y$ exists and are $2/3$ and $-2/3$ in both of $E^{+-}(\R)$ and $E^{-+}(\R)$. The only thing to prove in Lemma~\ref{lem:deviations_for_almost_all_surfaces} is that $f^{+-}$ (resp. $f^{-+}$) does not belong to the stable subspace of $E^{+-}(\R)$ (resp. $E^{-+}(\R)$) associated to $-2/3$). If $f$ belongs to the stable subspace, then $\|B^{(t)} f\|$ goes to zero as $t$ tends to infinity. But recall that the the Kontsevich-Zorich cocycle takes values in the set of integer matrices of determinant $1$. In particular it preserves the set $E^{+-}(\Z)$. On the other hand, $f$ is a rational vector and there exists $N$ such that $N f \in E^{+-}(\Z)$. In particular, the quantity $\|B^{(t)} f\|$ is bounded below by
\[
\frac{1}{N} \min_{v \in E^{+-}(\Z) \backslash \{0\}} \|v\|
\]
which is different from $0$ as the integer vectors $E^{+-}(\Z)$ form a lattice in $E^{+-}(\R)$.
\end{proof}

\newpage

\end{document}